\documentclass[11pt]{article}
\usepackage[english]{babel}
\usepackage{amsmath,amssymb,amsfonts,graphicx,subfigure,float}
\usepackage[ruled]{algorithm2e} % ubuntu package: texlive-science 
\usepackage[colorlinks=true, allcolors=black]{hyperref}
\usepackage{myshortcuts}
\usepackage{bbm} % vipl 
\usepackage{booktabs} % vipl: better table spacing

\usepackage{pgfplots}
\usetikzlibrary{external}
\usetikzlibrary{decorations.pathreplacing}

\usepackage{pifont}
\definecolor{myblue}{RGB}{116,173,209}
\definecolor{myred}{RGB}{244,109,67}
\definecolor{mygreen}{RGB}{102,189,99}
\definecolor{mycyan}{RGB}{5,131,135} % vj was here
\definecolor{mydarkblue}{RGB}{49,54,149}
\definecolor{mydarkred}{RGB}{165,0,38} 

\usepackage{amsthm}
\usepackage{thmtools}
\usepackage[nameinlink]{cleveref}

\newtheorem{theorem}{Theorem}[section]
\newtheorem{lemma}[theorem]{Lemma}
\newtheorem{definition}[theorem]{Definition}

\theoremstyle{remark}
\newtheorem{remark}{Remark}

\numberwithin{equation}{section}
\iftrue% set to iftrue if you want to regen PDF-files in gfx_tmp. 
\usepackage{tikz}
\usepackage{pgfplots}
\pgfplotsset{
	compat=newest,
	tick label style={font=\scriptsize},
	label style={font=\scriptsize},
	legend style={font=\scriptsize}
}

\pgfplotsset{select coords between index/.style 2 args={
		x filter/.code={
			\ifnum\coordindex<#1\fi
			\ifnum\coordindex>#2\fi
		}
}}
\usetikzlibrary{decorations.pathreplacing}

\iffalse % Set to iftrue if you want to regen fig PDF-files. 
\usepgfplotslibrary{external} % requires a recent version of pgf package
\else
\renewcommand{\tikzsetnextfilename}[1]{}
\fi
\else
\usepackage{tikzexternal}
\tikzsetexternalprefix{gfxtmp/}

\fi
\title{Conditioning and backward errors for nonlinear eigenvalue problems with eigenvector nonlinearities}

\author{Vilhelm~P. Lithell\footnote{Department of Mathematics, KTH Royal Institute of Technology, {\tt\small\{vipl,eliasj\}@kth.se}}    \and Victor~Janssens\footnote{Department of Computer Science, KU Leuven, {\tt\small\{firstname.lastname\}@kuleuven.be}} \and Elias~Jarlebring\footnotemark[1] \and Karl~Meerbergen\footnotemark[2] \and Wim~Michiels\footnotemark[2]}

\date{}

\begin{document}
	
	\maketitle
	\begin{abstract}
		
		We consider eigenvalue condition numbers and backward errors for a class of symmetric nonlinear eigenvalue problems with 
		% particular types of 
		eigenvector nonlinearities.
		% , namely ones where the nonlinearities appear as a sum of terms with a coefficient matrix and a scalar function of the eigenvector
		%, $A(v) = f_1(v) A_1 + \dots + f_m(v) A_m$.
		For both of these quantities, we derive explicit and computable expressions that can be evaluated with little computational effort for a given eigenpair, assuming the matrix perturbations are measured by the spectral or Frobenius norm. We also show how symmetric perturbations can be exploited in the analysis.
		%While the formulas for the backward error share many characteristics with analogous expressions for other types of eigenvalue problems, the eigenvalue condition number differs qualitatively from those of previously studied problem classes in several respects.
		%We highlight these differences, and 
		By means of two numerical experiments we demonstrate that problems incorporating eigenvector nonlinearities potentially need to be treated with additional care, when compared to the linear or eigenvalue-nonlinear theory. 
		
		%    \keywords{nonlinear eigenvalue problems \and eigenvector nonlinearities \and condition number \and backward error}
		%   \subclass{35P30 \and 65H17 \and 15A18 \and 65F15 }
	\end{abstract}
	
	% neutral layout

	\noindent\textbf{Keywords:} nonlinear eigenvalue problems, eigenvector nonlinearities, condition number, backward error
	
	\noindent\textbf{MSC codes:} 35P30, 65H17, 15A18, 65F15 
	
	\section{Introduction}\label{sec:intro}
	Two of the most important concepts in numerical linear algebra are the notions of conditioning and backward error.
	The condition number of a problem is essential for understanding how inaccuracies in the data affect the result of a computation. 
	Backward errors are in turn crucial for gaining insight into the stability of numerical methods. 
	Additionally, the backward error and condition number together provide a first order approximation to the forward error, through their product \cite{Higham:2002:ACCURACY}. 
	Today, most modern high-quality software packages for numerical linear algebra include methods for computing or estimating condition numbers and backward errors. 
	Of particular relevance to this work, such algorithms are for instance included in both the LAPACK \cite{anderson1999lapackusersguide} and SLEPC \cite{slepcusersmanual2025} software packages. 
	
	Both the conditioning and backward error are well studied for a number of problems, including but not limited to; linear systems \cite{higham92linsys}, least-squares problems \cite{ming1998backwarderrbounds}\cite{sun1997leastsqbackerr}, linear and generalized eigenvalue problems \cite{higham1998structured,fraysse1998gepbackerr}, and eigenvalue-nonlinear problems \cite{tisseur2000backward}\cite[Section 2.7]{Guttel2017}.
	The aim of this work is to contribute to this growing body of work by considering conditioning and backward-error analysis for a class of problems which is yet to receive this same attention, with some exceptions, see literature discussion below. 
	This is in spite of the fact that this problem appears in several important applications.
	Specifically, we will consider the analysis of conditioning and backward errors for \textit{eigenvalue problems with eigenvector nonlinearities} (NEPv). 
	That is, the problem of determining $(\lambda, v)\in\mathbb{R}\times\mathbb{R}^n\backslash\{0\}$ such that 
	\begin{equation}
		A(v)v = \lambda v,\label{eq:NEPv_scale_inv}
	\end{equation}
	where $A(v):\mathbb{R}^n\rightarrow\mathbb{R}^{n\times n}$ is a function mapping vectors to symmetric matrices, i.e., $A(v)=\trans{A(v)}$ for all $v$.
	We assume $A(v)$ to be continuously differentiable on $\mathbb{R}^n\backslash\{0\}$.
	Moreover, we require $A(v)$ to be scaling invariant, that is, for any $\alpha\in\mathbb{R}\backslash \{0\}$ and $u\in\mathbb{R}^n$, it holds that
	\begin{equation}\label{eq:scale_inv_prop}
		A(\alpha u) = A(u).
	\end{equation}
	While this problem will be our main focus, we also consider a NEPv with a normalization condition, i.e., find $(\lambda, v)\in\mathbb{R}\times\mathbb{R}^n\backslash\{0\}$ such that
	\begin{equation}
		A(v)v = \lambda v,\quad \norm{v}=1.\label{eq:NEPv_norm}
	\end{equation}
	Notice in particular that we do not require $A(v)$ to be scaling invariant in \eqref{eq:NEPv_norm}.
	A problem of the form \eqref{eq:NEPv_norm} can be transformed into a problem of the form \eqref{eq:NEPv_scale_inv} in a general way, see \cite{jarlebring2014inverseiter}, as well as 
	% Remark~\ref{remark:eigval_cond_rescaling}
	\Cref{remark:eigval_cond_rescaling}.

	We will assume that the function $A(v)$ in \eqref{eq:NEPv_scale_inv} can be decomposed as
	\begin{equation}\label{eq:spmf_form}
		A(v) = \sum_{i = 1}^m f_i(v) A_i,  
	\end{equation}
	%(possibly having $m=n^2$)
	meaning we will focus on eigenvector nonlinearities of this form.
	This assumption is not limiting, since we can always take $m=n^2$, and in applications it is often the case that $m\ll n^2$.
	Here, $f_i: \mathbb{R}^n \rightarrow \mathbb{R}$ are scaling invariant scalar functions, and $A_i = \trans{A_i} \in \mathbb{R}^{n \times n}$ are coefficient matrices for $i=1,\dots, m$.
	In the context of this paper, we will consider perturbations on the coefficient matrices $A_i$.
	
	The equation \eqref{eq:NEPv_scale_inv} defines a very large class of problems, and includes several important applications as special cases, some of which we now list.
	For completeness, we also mention a selection of numerical methods for the solution of these problems.
	
	One of the most well-studied problems of this form is the Gross-Pitaevskii equation (GPE), which is widely used in the modeling of Bose-Einstein condensation, 
	see for instance \cite{bao2003numericalgpe}, as well as the recent review \cite{Henning:2025:REVIEW} and the references therein.
	The nonlinearities that appear in the discretized GPE are quadratic in the elements of the eigenvector, which is due to incorporating particle-particle interactions in the formulation \cite[Section 2.2]{Henning:2025:REVIEW}.
	Perhaps the most commonly used method class for the GPE are the so-called gradient-flow methods \cite{bao2004normalizedgradflow}\cite{henning2020sobolevgradflow}, which are also considered to be the state of the art for this problem class.
	A different method class that has been applied to the GPE consists of generalizations of inverse iteration, see \cite{jarlebring2014inverseiter}. 
	This method has also had its convergence characteristics thoroughly investigated in \cite{henning2022invitconv}.
	
	A different numerical method that is commonly used in practice is the so-called self-consistent field (SCF) iteration, 
	which is a fixed-point iteration that generates a sequence of approximations by repeatedly solving a linear eigenvalue problem obtained by fixing the argument in $A(v)$ to be the current eigenvector approximation.
	This process is repeated until \textit{self-consistency} is reached, i.e., the iterates are no longer changing between iterations.
	While the convergence of the SCF iteration has been studied extensively from a theoretical perspective, its global convergence behavior is still an active area of research \cite{yang2009scfanalysis}\cite{upadhyaya2021scfconv}\cite{bai2022sharpscfestimate}\cite{lu2024locallyscf}.
	The iteration does not always converge, but the convergence characteristics can be improved by considering different acceleration strategies \cite{pulay1982diisacceleration}\cite{saad2025accelerationfixedpoint}.
	
	Problems of the form \eqref{eq:NEPv_scale_inv} also appear in applications from data-science.
	For instance, 
	%\eqref{eq:NEPv_scale_inv} appears 
	in $p$-spectral clustering methods for data partitioning \cite{hein2009pspectral}\cite{hein2010pspectral}, which generalizes the usual spectral clustering based on the graph Laplacian,
	% Additionally, \eqref{eq:NEPv_scale_inv} also has application to
	and in robust Rayleigh-quotient optimization \cite{bai2018robust}.
	
	Recently, a different line of research has considered specific structures in \eqref{eq:NEPv_scale_inv}, motivated by different applications, which are subsequently exploited in order to develop efficient numerical methods \cite{jarlebring2026nepvtonep_BIT}\cite{claes2022linearizenepv}\cite{janssens2025linearizing}\cite{claes2023contour}.
	Independently, both the works \cite{claes2022linearizenepv}\cite{janssens2025linearizing} formulate a linear problem whose spectrum contains the spectrum of the nonlinear problem, i.e., they construct a form of companion linearization.
	
	Finally, there has been some previous work regarding perturbation theory for eigenvector-nonlinear problems, notably the work \cite{cai2020perturbation} which deals with
	%However, this work deals with a related but different problem, namely 
	the \textit{subspace} formulation of the NEPv.
	In this problem formulation, one is essentially searching for an invariant subspace of a nonlinear function $G:\mathbb{R}^{n\times k}\rightarrow \mathbb{R}^{n\times n}$, and it has important applications for instance in electronic structure calculations \cite{martin2020electronicstructurecalc}. 
	Additionally, \cite{cai2020perturbation} assumes a particular structure in this problem,
	for which a condition number, perturbation theory and error bounds are obtained.
	%For this problem class, \cite{cai2020perturbation} considers a condition number and perturbation theory, and obtain bounds on the quantities they define.
	In contrast, the current work focuses on the problem \eqref{eq:NEPv_scale_inv} (corresponding to $k=1$), but assumes no particular structure, and we obtain explicit computable expressions for both the condition number and backward error.
	Related to this work is \cite{truhar2021relativepert} that derives existence and uniqueness results for the subspace formulation of the NEPv from the perspective of relative perturbation theory.
	
	The remainder of this article is structured as follows. 
	In \Cref{sec:prelims} we begin by providing some preliminary results which we will frequently utilize for the derivations in this paper. 
	\Cref{sec:cond} is devoted to analyzing the conditioning of eigenvalues and eigenvectors of \eqref{eq:NEPv_scale_inv}, while \Cref{sec:back_err} deals with deriving computable expressions for the backward error of approximate eigenpairs.
	Some numerical examples that highlight important aspects of our results and how they differ from the linear and eigenvalue-nonlinear theory are provided in \Cref{sec:sims}.
	Finally, we conclude by summarizing our results and providing an outlook on some future research topics in \Cref{sec:conc_and_out}.
	
	\textit{Notation}.
	The notation we employ is mostly standard.
	At several points in this work we will consider the eigenvalues and eigenvectors as functions of $\varepsilon$, the size of a perturbation to the original problem.
	This will be indicated by writing $\lambda(\varepsilon)$ and $v(\varepsilon)$, respectively, as well as $\lambda'(\varepsilon)$, $v'(\varepsilon)$ for their derivatives with respect to $\varepsilon$. 
	When $\varepsilon=0$ we will simply write $\lambda, v$ and $\lambda', v'$.
	Throughout this article, $\norm{\cdot}$ will denote the Euclidean $2$-norm when applied to vectors. 
	However, with minor modifications our results are also valid if one assumes the framework of dual norms, as is done for example in \cite{higham1998structured}.
	When considering norms of matrices, we will use either the spectral norm or the Frobenius norm.
	When it is important to use a certain norm in particular, this is emphasized.
	
	\section{Preliminaries}\label{sec:prelims}
	
	We begin our analysis by considering some preliminary results relating to \eqref{eq:NEPv_scale_inv}, which we will repeatedly exploit in the forthcoming sections.
	Our analysis in this paper will frequently make use of properties of the Jacobian of the left hand side of \eqref{eq:NEPv_scale_inv}, where we define the Jacobian $J(v)$ as
	\begin{align}
		J(v) \coloneq \frac{\partial}{\partial v} \left( A(v)v \right).
		\label{eq:Jacobian}
	\end{align}
	The following property of the scaling invariance of \eqref{eq:NEPv_scale_inv} will be used throughout the article, the proof of which can be found in \cite{jarlebring2014inverseiter}.
	
	\begin{lemma}\label{lem:scaleinv_jac}
		Let $A(v):\mathbb{R}^n\rightarrow\mathbb{R}^{n\times n}$ satisfy the scaling invariance property \eqref{eq:scale_inv_prop} as in \eqref{eq:NEPv_scale_inv},
		then 
		\begin{equation}
			A(v)v = J(v)v,
		\end{equation}
		for all $v\in\mathbb{R}^n$.
		In particular, if $(\lambda, v)$ is an eigenpair of \eqref{eq:NEPv_scale_inv}, then it is also an eigenpair of $J(v)$.
	\end{lemma}
	% \begin{proof}
		% Notice that $J(v)v$ can be interpreted as a directional derivative, so that
		% \begin{align*}
			%     J(v)v = \left. \frac{\partial}{\partial u} \left( A(u)u \right) \right|_{u=v} v 
			%     & = \lim_{\varepsilon \rightarrow 0} \frac{A(v + \varepsilon v)(v + \varepsilon v) - A(v)v}{\varepsilon} \\
			%     & = \lim_{\varepsilon \rightarrow 0} \frac{A(v)(v + \varepsilon v) - A(v)v}{\varepsilon} \\
			%     & = A(v)v,
			% \end{align*}
		% which shows the first claim. 
		% The second claim follows immediately.
		% \qed
		% \end{proof}
	
	The property that an eigenpair $(\lambda, v)$ of problem \eqref{eq:NEPv_scale_inv} is also an eigenpair of $J(v)$, is one reason why the Jacobian $J(v)$ plays an important role in iterative schemes for solving \eqref{eq:NEPv_scale_inv} \cite{jarlebring2014inverseiter}\cite{jarlebring2022implicit}, as well as in the condition number and backward error results derived in this work.
	
	Due to the scaling invariance property of \eqref{eq:NEPv_scale_inv}, any rescaled version of an eigenvector $v$ is also an eigenvector of the problem.
	%which is what characterizes \eqref{eq:NEPv_scale_inv} as an eigenvalue problem. 
	However, in order to derive results on perturbation theory of \eqref{eq:NEPv_scale_inv}, we need the eigenvector to be isolated.
	This is something that needs to be accounted for even in the case of linear eigenvalue problems, see for instance \cite{higham1998structured}.
	In order to resolve the redundancy induced by the scaling invariance, the normalization condition $\frac{1}{2}\trans{v}v = 1$ is added as an additional equation.
	Alternatively, one can employ a normalization condition as in \eqref{eq:NEPv_norm}.
	In the former case, the eigenvalue problem can equivalently be described using the set of nonlinear equations
	\begin{align}
		\label{eq:nonlin_sys_eq}
		F(\lambda,v) \coloneq \begin{bmatrix}
			A(v)v - \lambda v \\
			1 - \frac{1}{2}\trans{v}v
		\end{bmatrix} = 0.
	\end{align}
	The Jacobian of this system evaluated in the eigenpair $(\lambda,v)$ is
	\begin{align}
		\label{eq:eigvec_cond_mat_prelims}
		J_F(\lambda,v) = \begin{bmatrix}
			J(v) - \lambda I & \,\, -v \\
			-\trans{v} & \,\, 0
		\end{bmatrix},
	\end{align}
	and must be nonsingular in order for $(\lambda,v)$ to be an isolated solution of \eqref{eq:nonlin_sys_eq}.
	The following Lemma connects \eqref{eq:eigvec_cond_mat_prelims} with simple eigenvalues of $J(v)$, similar to the case of the linear eigenvalue problem.
	\begin{lemma}\label{lem:nonsinglar_syst_mat}
		Let $(\lambda, v)$ be an eigenpair of \eqref{eq:NEPv_scale_inv}.
		Then the matrix \eqref{eq:eigvec_cond_mat_prelims} is nonsingular if and only if $\lambda$ is a simple eigenvalue of $J(v)$.
	\end{lemma}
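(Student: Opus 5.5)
The plan is to characterize the kernel of the bordered matrix directly, using only \Cref{lem:scaleinv_jac}. Write $M \coloneq J(v)-\lambda I$. By \Cref{lem:scaleinv_jac}, $Mv = J(v)v - \lambda v = A(v)v-\lambda v = 0$, and $v\neq 0$. So $\lambda$ is an eigenvalue of $J(v)$ with eigenvector $v$.

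One point needs care. $J(v)$ is in general \emph{not} symmetric: it contains the terms $\sum_i A_i v\,\nabla f_i(v)^{T}$. Hence ``simple'' must be read as algebraically simple. I will use the standard characterization: $\lambda$ is algebraically simple if and only if $\dim\ker M = 1$ and $\ker M^2=\ker M$. Given $\ker M=\operatorname{span}\{v\}$, the second condition is equivalent to $v\notin\operatorname{range}(M)$, i.e.\ there is no Jordan chain of length two starting at $v$.

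The core step is to examine a kernel vector $(x,\mu)$ of \eqref{eq:eigvec_cond_mat_prelims}. It satisfies $Mx=\mu v$ and $v^{T}x=0$. I split into two cases.

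\textbf{Case $\mu\neq 0$.} Then $M(x/\mu)=v$, so $v\in\operatorname{range}(M)$, and $\lambda$ is not algebraically simple.

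\textbf{Case $\mu=0$.} If $(x,\mu)$ is a nonzero kernel vector, then $x\neq 0$, $x\in\ker M$ and $x\perp v$. Thus $x$ and $v$ are linearly independent elements of $\ker M$, and the geometric multiplicity is at least $2$.

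Together these show: if $\lambda$ is simple, the kernel is trivial, so the matrix is nonsingular.

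For the converse, assume $\lambda$ is not simple. Then either $\dim\ker M\ge 2$ or $v\in\operatorname{range}(M)$, and in each case I construct a nonzero kernel vector.

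If $\dim\ker M\ge 2$, pick $y\in\ker M$ not parallel to $v$. Set $x=y-\frac{v^{T}y}{v^{T}v}v$. Then $x\neq 0$, $x\in\ker M$ and $v^{T}x=0$, so $(x,0)$ is in the kernel.

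If $My=v$ for some $y$, set $x=y-\frac{v^{T}y}{v^{T}v}v$ again. Because $Mv=0$, we still have $Mx=v$, and now $v^{T}x=0$. Hence $(x,1)$ is a nonzero kernel vector.

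The main (mild) obstacle is precisely the non-symmetry of $J(v)$. In the symmetric case one could simply argue that $v\perp\operatorname{range}(M)$. Here that is unavailable, which is why the Jordan-chain case and the projection trick above are needed, together with the equivalence between algebraic simplicity and the pair of conditions $\dim\ker M=1$, $v\notin\operatorname{range}(M)$. I will justify that equivalence from the Jordan form.
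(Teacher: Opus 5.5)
Your proof is correct, and it differs from the paper's in how it detects non-simplicity beyond geometric multiplicity.

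\textbf{The paper's route.} The paper works with the left eigenvector $u$ of $J(v)$ and uses, as a standard fact, the characterization ``$\lambda$ is simple $\iff$ $\operatorname{corank}(J(v)-\lambda I)=1$ and $u^{T}v\neq 0$''.
\begin{itemize}
\item For simple $\Rightarrow$ nonsingular, it premultiplies the first block row by $u^{T}$ to get $w_2\,u^{T}v=0$, hence $w_2=0$, and then finishes with corank one.
\item For the converse, it finds a right null vector $(w,0)$ by rank counting when the corank exceeds one. When $u^{T}v=0$, it exhibits the \emph{left} null vector $[u^{T}\ 0]$.
\end{itemize}

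\textbf{Your route.} You stay entirely with right kernels. You use the Jordan-chain criterion $v\notin\operatorname{range}(J(v)-\lambda I)$, and in the bad case you build an explicit right null vector $(x,1)$ by projecting a preimage of $v$ onto $v^{\perp}$.

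\textbf{Why the two agree.} The criteria coincide: when $\ker M=\operatorname{span}\{v\}$, one has $\operatorname{range}(M)=(\ker M^{T})^{\perp}=u^{\perp}$. Hence $v\in\operatorname{range}(M)\iff u^{T}v=0$.

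\textbf{What each buys.} Your version is more self-contained: you derive your criterion from the Jordan form instead of quoting the left-eigenvector test, and you give explicit kernel vectors in every failure case. The paper's version is shorter and puts $u^{T}v$ front and centre. That is the quantity in the denominator of the condition number in \Cref{thm:eigval_cond_thm}, and the one that vanishes at the bifurcation points in \Cref{sim:eigval_illcond}.
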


	\begin{proof}
		We start the proof with the backward implication. Let $\lambda$ be a simple eigenvalue of $J(v)$.
		%, then $v$ is the corresponding eigenvector because of Lemma \ref{lem:scaleinv_jac}
		Suppose that \eqref{eq:eigvec_cond_mat_prelims} is singular, i.e., there exists a nonzero vector $\trans{w} = \begin{bmatrix}
			\trans{w_1} & w_2
		\end{bmatrix}$ such that
		\begin{align}
			\label{eq:Jacobian_vector_multiplication}
			J_F(\lambda,v)w = \begin{bmatrix}
				(J(v) - \lambda I)w_1 - vw_2 \\
				-\trans{v}w_1
			\end{bmatrix} = 0.
		\end{align}
		Left-multiplying the first equation with the left eigenvector $u$ of $J(v)$, corresponding to $\lambda$, gives $w_2 \, (\trans{u}v) = 0$.
		If $\trans{u}v$ equals zero, $\lambda$ is a non-simple eigenvalue which gives a contradiction, so $w_2$ must be equal to zero.
		As a result, $(J(v) - \lambda I)w_1 = 0$ and because $\lambda$ is simple, the co-rank of $J(v) - \lambda I$ equals $1$, meaning $w_1$ is a multiple of the eigenvector $v$, that is, $w_1 = \alpha v$.
		Then the second equation in \eqref{eq:Jacobian_vector_multiplication} reads $|\alpha| \, \norm{v}_2^2 = 0$, and since $v$ is nonzero we have $\alpha=0$, which contradicts with the assumption that $w$ is a nonzero vector.
		Hence \eqref{eq:eigvec_cond_mat_prelims} is nonsingular.
		
		Conversely, let \eqref{eq:eigvec_cond_mat_prelims} be nonsingular.
		Assume that $\lambda$ is non-simple.
		Then either the co-rank of $J(v) - \lambda I$ is greater than $1$ or $\trans{u} v = 0$ where $u$ is the left eigenvector of $J(v)$ corresponding to $\lambda$.
		In the former case, the matrix 
		\begin{align*}
			\begin{bmatrix}
				J(v) - \lambda I \\ 
				-\trans{v}
			\end{bmatrix}
		\end{align*}
		is of rank at most $n-1$, so there exists a nonzero vector $w$ in its right nullspace, making $\trans{\begin{bmatrix}
				\trans{w} & 0
		\end{bmatrix}}$ a nonzero vector in the nullspace of \eqref{eq:eigvec_cond_mat_prelims}. In the latter case, it is easy to verify that $\begin{bmatrix}
			\trans{u} & 0
		\end{bmatrix}$ lies in the left nullspace of \eqref{eq:eigvec_cond_mat_prelims}. Consequently, the Jacobian \eqref{eq:eigvec_cond_mat_prelims} is singular which gives a contradiction, and $\lambda$ must be simple. %\qed
	\end{proof}
	
	% See Theorem 3.2 in "ALAN L. ANDREW, K.-W. ERIC CHU, AND PETER LANCASTER. DERIVATIVES OF EIGENVALUES AND EIGENVECTORS OF MATRIX FUNCTIONS SIAM J. MATRIX ANAL. APPL. Vol. 14, No. 4, pp. 903-926, October 1993" for the NEPlambda.
	
	% \begin{proof}
		%     Suppose \eqref{eq:eigvec_cond_mat} is nonsingular.
		%     Then the equation
		%     \begin{equation}
			%         (J(v)-\lambda I)u + \gamma v = 0
			%     \end{equation}
		%     has only the trivial solution, $(u,\gamma) = (0,0)$. 
		%     In particular, we have for $\gamma=1$ that $(A-\lambda I)u + v \neq 0$ for all $u$.
		%     Hence, $\lambda$ corresponds only to Jordan blocks of size 1, and is semi-simple.
		%     Suppose on the contrary that $\lambda$ is semi-simple. 
		%     Then for all $u$ and any $\gamma\neq 0$ we have 
		%     \begin{equation}
			%         (J(v)-\lambda I)u + \gamma v \neq 0.
			%     \end{equation}
		%     Hence, \eqref{eq:eigvec_cond_mat} has a trivial nullspace, and is nonsingular.
		% \end{proof}
	
	%For this we look at solutions of  $(J(v)-\lambda I) u_1+ u_2\ v =0$ and $v^H u_1=0$. The first equation for $u_1\neq 0$ characterizes an eigenvector ($u_2=0$) or generalized eigenvector ($u_2\neq 0$). If the multiplicity is at least two, there always exists an eigenvector or generalized eigenvector orthogonal to $v$.
	%
	%If we have a solution with $u_2\neq 0$ (defective case) then we must have $u^H v=0$, consistent with the results on eigenvalue sensitivity.
	%The other way around, if $\lambda$ is an eigenvalue of $J(v)$ with geometric multiplicity one and $u^H v=0$, then the equation with $u_2\neq 0$ is solvable for $u_1$ resultig in a singular matrix.
	
	As a consequence of \Cref{lem:nonsinglar_syst_mat}, simple eigenvalues of $J(v)$ with eigenvector $v$ correspond to isolated solutions of the system of nonlinear equations \eqref{eq:nonlin_sys_eq} which naturally leads to the following definition concerning simple eigenpairs of \eqref{eq:NEPv_scale_inv}, similar to that of eigenvalue-nonlinear problems, or the standard eigenvalue problem.
	\begin{definition}\label{def:simple_eig}
		An eigenpair $(\lambda, v)$ of \eqref{eq:NEPv_scale_inv} is called simple if $\lambda$ is a simple eigenvalue of $J(v)$ with eigenvector $v$.
	\end{definition}
	
	In what follows, condition numbers and backward error formulas will be derived for simple eigenvalues of \eqref{eq:NEPv_scale_inv}, in the sense of \Cref{def:simple_eig}.

	\section{Conditioning}\label{sec:cond}
	
	% vipl: edited slightly to have consistent notation.
	
	%This section derives eigenvalue and eigenvector condition numbers for simple eigenpairs of \eqref{eq:NEPv_scale_inv}.
	Using the developments from the previous section, we will now establish conditioning results for eigenvalues and eigenvectors of \eqref{eq:NEPv_scale_inv}.
	In order to do this, we will, for fixed matrices $E_i$, $i=1,\dots,m$, investigate perturbations $\Delta A(v)$ on the function $A(v)$ of the form
	%\begin{equation}
	%    \Tilde{A}(v, \varepsilon) = A(v) + \Delta A(v) = A(v) + \sum_{i=1}^m f_i(v) \Delta A_i,
	%\end{equation}
	\begin{equation}\label{eq:perturbed_A(v)}
		A(v) + \Delta A(v, \varepsilon) := A(v) + \sum_{i=1}^m f_i(v)\varepsilon E_i,
	\end{equation}
	where $\norm{E_i}\leq w_i$, with $w_i \geq 0$ being prescribed weights that can be used to change how we measure the perturbations.
	Intuitively, one can think of $\varepsilon$ as controlling the size of the perturbation, while $E_i$, $i=1,\dots,m$, determines the direction of the perturbation.
	We will consider both arbitrary real perturbations, as well as symmetric real perturbations that preserve the structure of the original problem.
	We are interested in studying how the eigenvalues and eigenvectors change under these perturbations, and in order to do this, the perturbed problem is cast into a set of nonlinear equations using a normalization condition on the eigenvectors, similar to \Cref{sec:prelims}.
	The perturbed problem can then be represented as the system of nonlinear equations
	%This is represented by the nonlinear function  $\tilde{F}:\mathbb{R}^{n+2} \rightarrow \mathbb{R}^{n+1}$ given by
	\begin{equation}\label{eq:implicit_functions_syst}
		\tilde{F}(\lambda,v, \varepsilon) := 
		\begin{bmatrix}
			(A(v) + \Delta A(v, \varepsilon))v - \lambda v \\
			1 - \frac{1}{2} \trans{v}v
		\end{bmatrix} = 0,
	\end{equation}
	with $\tilde{F}(\lambda,v,0) = F(\lambda,v)$.
	% Notice that $\tilde{F}(\lambda,v,0) = F(\lambda,v)$.
	In what follows, we will view the eigenvalues and eigenvectors as functions of $\varepsilon$. %, the size of the perturbation. %, i.e., functions of $\varepsilon$.
	In order to do this, we will invoke the implicit function theorem.
	Assume therefore that the matrices $E_i$ in \eqref{eq:perturbed_A(v)} are fixed, and that $\tilde{F}(\lambda, v, 0) = 0$, i.e., the nominal $(\lambda, v)$ is a solution to \eqref{eq:NEPv_scale_inv}.
	Then from the implicit function theorem \cite[Theorem 9.28]{rudin1986mathematical_analysis} 
	we have the following statement.
	If $\tilde{F}$ is continuously differentiable
	in a neighborhood of $(v, \lambda, 0)$ and $J_F(\lambda,v)$ \eqref{eq:nonlin_sys_eq} is nonsingular (i.e., $(\lambda,v)$ is simple), then there exists an open set $\mathcal{U}\subset\mathbb{R}$ containing $\varepsilon = 0$ and unique differentiable functions $v(\varepsilon)$ and $\lambda(\varepsilon)$ on $\mathcal{U}$ such that $v(0) = v$, $\lambda(0) = \lambda$.
	Additionally we have that 
	\begin{equation}\label{eq:perturbed_problem_constraint}
		(A(v(\varepsilon)) + \Delta A(v(\varepsilon), \varepsilon))v(\varepsilon) - \lambda(\varepsilon) v(\varepsilon) = 0, \quad \text{and}\quad \frac{1}{2} \trans{v(\varepsilon)}v(\varepsilon) = 1,
	\end{equation}
	for all $\varepsilon \in \mathcal{U}$.
	Furthermore, the derivatives of these functions with respect to $\varepsilon$, evaluated in $\varepsilon = 0$, satisfy
	\begin{equation}
		\label{eq:derivatives_of_v_lambda}
		J_F(\lambda,v)
		\begin{bmatrix}
			v' \\ \lambda'    
		\end{bmatrix}
		=
		\begin{bmatrix}
			-\sum_{i=1}^m f_i(v) E_iv \\ 0
		\end{bmatrix},
	\end{equation}
	where $J_F(\lambda,v)$ is given by \eqref{eq:eigvec_cond_mat_prelims}.
	%These functions are used to define the eigenvalue and eigenvector condition numbers in this section.
	These functions $\lambda(\varepsilon)$ and $v(\varepsilon)$ 
	%defined in this manner 
	will in turn be used to define the condition numbers in this section.
	In particular, the condition number can be thought of as the worst case derivatives of $\lambda(\varepsilon)$ and $v(\varepsilon)$, when evaluated in $\varepsilon=0$,
	that is, we consider the perturbations $E_i$, $i=1,\dots,m$, that maximize $|\lambda'|$ and $\norm{v'}$, respectively.
	% Hence, for the forthcoming developments, we will often maximize quantities such as $|\lambda'|/|\lambda|$, by which we mean a maximization problem over all feasible perturbation directions $E_i$.
	We refrain from writing $\lambda, v$ and $\lambda', v'$ explicitly as functions of $E_i$, for ease of notation.
	%The reader should however keep this in mind when considering the upcoming definitions.
	
	\begin{remark}[Normwise and componentwise analysis]
		In this work, only a normwise analysis is considered, i.e., we only study the effects of perturbations that are bounded in norm.
		This is however not the only formulation possible, and it is common to also consider perturbations that are bounded \textit{componentwise} (see, e.g., \cite{higham1998structured}).
		%While this is not something we consider here, we expect that our results generalize straight-forwardly to this context.
	\end{remark}
	
	\subsection{Eigenvalue condition number}
	
	Having established the existence of functions $(\lambda(\varepsilon), v(\varepsilon))$, we now use the discussion above to study conditioning aspects of \eqref{eq:NEPv_scale_inv}.
	We begin by defining the condition number of a simple eigenvalue, and subsequently derive a computable expression for this quantity.
	For the perturbations we also include weights $w_i \geq 0$, $i=1,\dots,m$, to allow flexibility on the size of the perturbations.
	%This notation will be used throughout.
	%The discussion in the previous section motivates the following definition.
	
	\begin{definition}\label{def:eigval_cond_def}
		Assume $(\lambda, v)$ is a simple eigenpair of \eqref{eq:NEPv_scale_inv}, and let $(\lambda(\varepsilon), v(\varepsilon))$ be the unique functions satisfying \eqref{eq:perturbed_problem_constraint}.
		Given weights $w_i \geq 0$, $i = 1, \dots, m$, the normwise eigenvalue condition number is defined as
		\begin{equation}\label{eq:eigval_cond_def}
			\begin{split}
				\kappa(\lambda) = \sup \left\{ \frac{|\lambda'|}{|\lambda|}\::\: \norm{E_i} \leq w_i, \: i = 1, \dots, m \right\},
			\end{split}
		\end{equation}
		where the supremum is taken over all feasible $E_i$.
		
		%Let $(\lambda, v)$ be an eigenpair of \eqref{eq:NEPv_scale_inv}, and assume $\lambda$ is simple.
		
		%%%%% Old Definition: %%%%%%%%
		% Let $(\lambda, v)$ be a simple eigenpair of \eqref{eq:NEPv_scale_inv}.
		% Then the normwise condition number of $\lambda$, $\kappa(\lambda)$, is defined to be
		% \begin{equation}\label{eq:eigval_cond_def}
			% \begin{split}
				%     \kappa(\lambda) = \limsup_{\varepsilon\rightarrow 0 }\left\{ \frac{|\lambda'(\varepsilon)|}{|\lambda|}\::\: \left(A(v(\varepsilon)) + \Delta A(v(\varepsilon))\right)v(\varepsilon) = \lambda(\varepsilon)v(\varepsilon), \right. \\
				%     \left. \phantom{\frac{|\Delta \lambda|}{\varepsilon |\lambda|}} \frac{1}{2}\trans{v(\varepsilon)}v(\varepsilon) = 1, \quad \norm{\Delta A_i}\leq \varepsilon w_i \right\},
				% \end{split}
			% \end{equation}
		% where $w_i>0$, $i = 1,\dots,m$, are prescribed weights.
		%%%%%%%%%%%%%%%%%%%%%%%%%%%%%%%%
	\end{definition}
	
	\begin{theorem}\label{thm:eigval_cond_thm}
		Let $(\lambda,v)$ be a simple eigenpair of \eqref{eq:NEPv_scale_inv}.
		Let $u$ be the left eigenvector of $J(v)$ corresponding to $\lambda$.
		Then $\kappa(\lambda)$ is given by the expression
		\begin{equation}
			\kappa(\lambda) = \frac{\left(\sum_{i=1}^m|f_i(v)|w_i\right)\norm{u}\norm{v}}{|\lambda||\trans{u}v|}.
			\label{eq:condition_number}
		\end{equation}
	\end{theorem}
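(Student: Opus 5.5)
We plan to begin from the derivative equation \eqref{eq:derivatives_of_v_lambda} and eliminate $v'$ with the left eigenvector $u$ of $J(v)$. Its first block row reads
\begin{equation*}
(J(v)-\lambda I)v' - \lambda' v = -\sum_{i=1}^m f_i(v)E_i v .
\end{equation*}
Multiplying from the left by $\trans{u}$ and using $\trans{u}(J(v)-\lambda I)=0$ gives
\begin{equation*}
\lambda' = \frac{\sum_{i=1}^m f_i(v)\,\trans{u}E_iv}{\trans{u}v}.
\end{equation*}
The denominator is nonzero because $\lambda$ is a simple eigenvalue of $J(v)$ with right eigenvector $v$ (\Cref{def:simple_eig} together with \Cref{lem:scaleinv_jac}), exactly as used in the proof of \Cref{lem:nonsinglar_syst_mat}. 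This yields an explicit linear functional of $(E_1,\dots,E_m)$. The condition number is then the supremum of its absolute value, divided by $|\lambda|$, over $\norm{E_i}\le w_i$.

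\textbf{Upper bound.} By the triangle inequality and Cauchy--Schwarz,
\begin{equation*}
|\trans{u}E_iv| \le \norm{u}\,\norm{E_i}\,\norm{v} \le w_i\norm{u}\norm{v}
\end{equation*}
for the spectral norm. The Frobenius norm dominates the spectral norm, so the same bound holds for it. Summing gives $|\lambda'|\le \left(\sum_i |f_i(v)|w_i\right)\norm{u}\norm{v}/|\trans{u}v|$, which is the right-hand side of \eqref{eq:condition_number} after dividing by $|\lambda|$.

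\textbf{Attainment.} The main step is to exhibit admissible perturbations that make every term equal $|f_i(v)|w_i\norm{u}\norm{v}$ with a common sign. Take
\begin{equation*}
E_i = \operatorname{sign}(f_i(v))\, w_i\, \frac{u\trans{v}}{\norm{u}\norm{v}},
\end{equation*}
with sign $0$ mapped to $1$. This is a rank-one matrix, so its spectral and Frobenius norms both equal $w_i$. Then $\trans{u}E_iv = \operatorname{sign}(f_i(v))\,w_i\norm{u}\norm{v}$. Each summand $f_i(v)\trans{u}E_iv$ therefore equals $|f_i(v)|w_i\norm{u}\norm{v}$, and the bound is achieved, so the supremum is a maximum.

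We should also check that this choice is legitimate within the implicit-function framework. The matrices $E_i$ are fixed, and $\tilde F$ is $C^1$ because $A$ is $C^1$ and the perturbation is linear in $\varepsilon$, so nothing further is needed. Since the statement concerns arbitrary real perturbations, we need not worry that $u\trans{v}$ is nonsymmetric; the symmetric case would be treated separately. The only subtle point we anticipate is justifying $\trans{u}v\neq 0$ and the left-multiplication step, both of which follow from simplicity as noted above.
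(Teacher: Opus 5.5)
Your proposal is correct and follows the paper's proof essentially step for step. You left-multiply the first block row of \eqref{eq:derivatives_of_v_lambda} by $\trans{u}$ to obtain \eqref{eq:lambda_deriv_expression}, bound it by Cauchy--Schwarz, and attain the bound with the rank-one perturbations $E_i = \sign(f_i(v))\,w_i\,u\trans{v}/(\norm{u}\norm{v})$; your explicit justification of $\trans{u}v\neq 0$ and the convention for $\sign(0)$ are small refinements that the paper leaves implicit.
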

	\begin{proof}
		The first equation in \eqref{eq:derivatives_of_v_lambda} reads
		\begin{equation}
			(J(v) - \lambda I)v' - \lambda'v =  -\sum_{i=1}^mf_i(v)E_iv.
		\end{equation}
		Multiplying from the left with $\trans{u}$, and using \Cref{lem:scaleinv_jac} yields
		\begin{equation}\label{eq:lambda_deriv_expression}
			\lambda' = \frac{\sum_{i=1}^mf_i(v)\trans{u}E_iv}{\trans{u}v}.
		\end{equation}
		Taking the absolute value of both sides, using Cauchy-Schwarz, and dividing by $|\lambda|$, gives the upper bound 
		\begin{equation}\label{eq:condition_num_upper_bound}
			\kappa(\lambda) \leq \frac{\left(\sum_{i=1}^m|f_i(v)|w_i\right)\norm{u}\norm{v}}{|\lambda||\trans{u}v|}.
		\end{equation}
		It remains to be shown that this bound can be attained by a specific choice of $E_i$.
		Let $E_i$ be defined by
		\begin{equation}
			E_i = \frac{\sign(f_i(v)) w_i u\trans{v}}{\norm{u}\norm{v}},\quad i=1,\dots,m.
		\end{equation}
		Then the norm inequalities $\norm{E_i} \leq w_i$ are satisfied as equalities for both the spectral and Frobenius norm, and we have 
		\begin{equation}
			\frac{|\lambda'|}{|\lambda|} = \frac{\left(\sum_{i=1}^m|f_i(v)|w_i\right)\norm{u}\norm{v}}{|\lambda||\trans{u}v|},
		\end{equation}
		which concludes the proof. %\qed
	\end{proof}
	
	\begin{remark}[Transformation to scaling invariant form]\label{remark:eigval_cond_rescaling}
		For the derivations in this section, we assume that the problem is scaling invariant, i.e, we consider problems of the form \eqref{eq:NEPv_scale_inv}.
		However, our theory can also treat problems that impose a normalization condition, that is, problems of the form \eqref{eq:NEPv_norm}.
		It is straight-forward to show that if we perform a rescaling 
		\begin{equation}
			\hat{A}(v) := A\left(\frac{v}{\norm{v}}\right),
		\end{equation}
		and compute the corresponding Jacobian of the rescaled problem, $\hat{J}(v) := \frac{\partial}{\partial v}\hat{A}(v)v$, then the rescaled version of the problem satisfies the scaling-invariance property
		\begin{equation}\label{eq:jacobian_rescaling}
			\hat{J}(v)v = \hat{A}(v)v.
		\end{equation}
		Hence, the above results are directly applicable to the rescaled problem, and the corresponding condition number becomes
		\begin{equation}
			\hat{\kappa}(\lambda) = \frac{\left(\sum_{i=1}^m|\hat{f}_i(v)|w_i\right)\norm{u}\norm{v}}{|\lambda||\trans{u}v|},
		\end{equation}
		where $\hat{f}_i(v)$ are the rescaled versions of $f_i(v)$, i.e., $\hat{f}_i(v) = f_i(v/\norm{v})$.
		
		Similar expressions also hold for the remaining results in this paper. 
		Hence, without loss of generality, we only treat problems of the form \eqref{eq:NEPv_scale_inv} in the remaining parts of this article.
	\end{remark}
	
	Note that \Cref{def:eigval_cond_def} enforces no symmetric restrictions on the perturbations, while
	%In particular, the perturbed problem is not necessarily an instance of \eqref{eq:NEPv_scale_inv}, i.e., it is not necessarily symmetric.
	%Consequently, Theorem~\ref{thm:eigval_cond_thm} does not respect the symmetric structure of the problem. 
	it is natural to study perturbations that conserve properties of the problem, and in order to study the behavior of simple eigenpairs of \eqref{eq:NEPv_scale_inv} under symmetric perturbations, we now define a symmetric condition number.
	Additionally, we subsequently show that the expression for the eigenvalue condition number \eqref{eq:condition_number} remains unchanged when requiring the perturbations to be symmetric, if one measures the perturbations using the spectral norm.
	Similar results also hold for linear and eigenvalue-nonlinear problems \cite{higham1998structured}\cite{tisseur2000backward}.
	
	\begin{definition}\label{def:sym_eigval_cond_def}
		Assume $(\lambda, v)$ is a simple eigenpair of \eqref{eq:NEPv_scale_inv}, and let $(\lambda(\varepsilon), v(\varepsilon))$ be the unique functions satisfying \eqref{eq:perturbed_problem_constraint}.
		Given the weights $w_i \geq 0$, $i = 1, \dots, m$, the normwise symmetric eigenvalue condition number is defined as
		\begin{equation}\label{eq:sym_eigval_cond_def}
			\kappa_{\text{sym}}^{(*)}(\lambda) = \sup \left\{ \frac{|\lambda'|}{|\lambda|}\::\: \norm{E_i}_* \leq w_i, \quad E_i = \trans{E_i}, \quad i = 1, \dots, m \right\},
		\end{equation}
		where the supremum is taken over all feasible $E_i$, and where $*$ is either $2$ or $F$ for the two norm and Frobenius norm, respectively.
		
		%%%%%%% OLD Definition: %%%%%%%%%%%%
		% Let $(\lambda, v)$ be a simple eigenpair of \eqref{eq:NEPv_scale_inv}.
		% Then the normwise symmetric condition number of $\lambda$, $\kappa_{\text{sym}}(\lambda)$, is defined to be
		% \begin{equation}\label{eq:sym_eigval_cond_def}
			% \begin{split}
				%     \kappa_{\text{sym}}(\lambda) = \limsup_{\varepsilon\rightarrow 0 }\left\{ \frac{|\lambda'(\varepsilon)|}{|\lambda|}\::\: \left(A(v(\varepsilon)) + \Delta A(v(\varepsilon))\right)v(\varepsilon) = \lambda(\varepsilon)v(\varepsilon), \right. \\
				%     \left. \phantom{\frac{|\Delta \lambda|}{\varepsilon |\lambda|}} \frac{1}{2}\trans{v(\varepsilon)}v(\varepsilon) = 1, \quad \norm{\Delta A_i}\leq \varepsilon w_i, \quad \trans{(\Delta A_i)} = \Delta A_i \right\},
				% \end{split}
			% \end{equation}
		% where $w_i>0$, $i = 1,\dots,m$, are prescribed weights.
		%%%%%%%%%%%%%%%%%%%%%%%%%%%%%%%%
	\end{definition}

	Other than the spectral norm, the most common norm to employ when measuring the size of perturbations to a problem is the Frobenius norm. 
	For this norm we can still achieve the same formula \eqref{eq:condition_number} for the condition number in the case where we allow arbitrary real perturbations, by again considering an optimal perturbation where $E_i$ is proportional to $u\trans{v}$, cf. the proof of \Cref{thm:eigval_cond_thm}.
	However, for symmetric perturbations, it turns out that when using the Frobenius norm we have the relation
	\begin{equation}
		\kappa_{\text{sym}}^{(F)}(\lambda) \leq \kappa(\lambda),
	\end{equation}
	where equality holds if and only if $u = cv$ for some scalar $c$.
	See also \Cref{tab:frobenius}.
	We now provide formulas for the symmetric condition number \eqref{eq:sym_eigval_cond_def} when employing both the spectral and Frobenius norm.
	
	\begin{theorem}\label{thm:sym_eigval_cond_thm}
		Let $(\lambda,v)$ be as in \Cref{def:sym_eigval_cond_def} and let $\theta$ be the angle between $u$ and $v$, i.e., $\theta := \arccos((\trans{u}v)/(\norm{u}\norm{v}))$, then
		\begin{equation}
			\kappa_{\text{sym}}^{(2)}(\lambda) = \kappa(\lambda), \quad \text{and} \quad  \kappa_{\text{sym}}^{(F)}(\lambda) = \beta \kappa(\lambda),
		\end{equation}
		where
		\begin{equation}\label{eq:beta_factor}
			\beta := \sqrt{\frac{1 + (\cos(\theta))^2}{2}}.
		\end{equation}
	\end{theorem}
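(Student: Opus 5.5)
The starting point is the derivative formula \eqref{eq:lambda_deriv_expression} from the proof of \Cref{thm:eigval_cond_thm}, namely $\lambda' = \sum_i f_i(v)\trans{u}E_iv/\trans{u}v$. It holds for any perturbation directions, in particular symmetric ones. Hence $|\lambda'|/|\lambda| \le \sum_i |f_i(v)|\,|\trans{u}E_iv|/(|\lambda||\trans{u}v|)$, and both claims reduce to a single-matrix problem. For each $i$ we must compute
\[
\mu_* := \max\{\, |\trans{u}Ev| : E=\trans{E},\ \norm{E}_*\le 1 \,\}
\]
with $u$ and $v$ fixed. The maximizers for different $i$ can be chosen independently, each scaled by $w_i$ and by $\sign(f_i(v))$ to align signs. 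Therefore
\[
\kappa_{\text{sym}}^{(*)}(\lambda) = \frac{\left(\sum_i |f_i(v)| w_i\right)\mu_*}{|\lambda||\trans{u}v|},
\]
and it suffices to show $\mu_2=\norm{u}\norm{v}$ and $\mu_F=\beta\norm{u}\norm{v}$.

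For the spectral norm, Cauchy--Schwarz gives $\mu_2\le\norm{u}\norm{v}$. To attain this I will use the standard Householder-type construction from \cite{higham1998structured}. Normalize $\hat u=u/\norm{u}$ and $\hat v=v/\norm{v}$. If $\hat u=\pm\hat v$, take $E=\pm\hat v\trans{\hat v}$. Otherwise take the symmetric orthogonal reflector $E=I-2w\trans{w}$ with $w=(\hat v-\hat u)/\norm{\hat v-\hat u}$, which maps $\hat v\mapsto\hat u$. This matrix is symmetric, has $\norm{E}_2=1$, and satisfies $E\hat v=\hat u$, so $\trans{u}Ev=\norm{u}\norm{v}$. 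This gives $\kappa_{\text{sym}}^{(2)}=\kappa$.

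For the Frobenius norm, observe that $\trans{u}Ev=\langle E, u\trans{v}\rangle_F=\langle E,\operatorname{sym}(u\trans{v})\rangle_F$ for symmetric $E$, where $\operatorname{sym}(X)=(X+\trans{X})/2$. The symmetric matrices form a subspace, so by Cauchy--Schwarz in the Frobenius inner product
\[
\mu_F=\norm{\operatorname{sym}(u\trans{v})}_F ,
\]
attained by $E=\operatorname{sym}(u\trans{v})/\norm{\operatorname{sym}(u\trans{v})}_F$. Expanding,
\[
\tfrac14\norm{u\trans{v}+v\trans{u}}_F^2=\tfrac14\left(2\norm{u}^2\norm{v}^2+2(\trans{u}v)^2\right)=\norm{u}^2\norm{v}^2\,\frac{1+\cos^2\theta}{2},
\]
so $\mu_F=\beta\norm{u}\norm{v}$.

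The step needing the most care is the attainment argument in the spectral case: the reflector construction must be checked, including its degenerate cases. A second point of care is justifying that the supremum over $m$ coupled matrices decouples into independent per-term maxima. This holds because each term $f_i(v)\trans{u}E_iv$ depends only on $E_i$, and the per-term optima can be sign-aligned via $\sign(f_i(v))$. Terms with $f_i(v)=0$ contribute nothing and can be dropped.
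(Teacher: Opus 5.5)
Your proposal is correct and follows essentially the same route as the paper. Both use the upper bound from \eqref{eq:lambda_deriv_expression} and attain it in the spectral norm with a Householder reflector mapping $\hat v$ to $\hat u$; in the Frobenius norm, both symmetrize $u\trans{v}$ inside the trace, apply Cauchy--Schwarz, and take $E_i$ proportional to $u\trans{v}+v\trans{u}$. Your explicit handling of the degenerate case $\hat u=\hat v$, where the paper's reflector is undefined, is a small but welcome addition.
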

	\begin{proof}
		For the spectral norm condition number $\kappa_{\text{sym}}^{(2)}(\lambda)$, we still have the upper bound \eqref{eq:condition_num_upper_bound}, but we need to show that this bound can be attained by a symmetric perturbation.
		Let $\hat{v}, \hat{u}$ be the normalized versions of $v$ and $u$, respectively.
		That is, we set $\hat{v} = v/\norm{v}$, and $\hat{u} = u/\norm{u}$.
		Let $H$ be the Householder reflector defined by $\hat{u}-\hat{v}$, i.e., 
		\begin{equation}
			H = I-2\frac{(\hat{u}-\hat{v})\trans{(\hat{u}-\hat{v})}}{\trans{(\hat{u}-\hat{v})}(\hat{u}-\hat{v})},
		\end{equation}
		and consider the matrices
		\begin{equation}
			E_i = \sign(f_i(v)) w_i H.
		\end{equation}
		Obviously, $E_i$ is symmetric for $i=1,\dots,m$.
		Since $H$ is symmetric and orthogonal, we have $\norm{H}_2 = 1$, meaning the inequalities in \eqref{eq:sym_eigval_cond_def} are satisfied as equalities.
		Furthermore, we have $\trans{u}Hv = \norm{u}\norm{v}$, which follows from standard properties of Householder reflectors. 
		Hence, for this choice of $H$ we have
		\begin{equation}
			|\lambda'| = \frac{\left(\sum_{i=1}^m|f_i(v)|w_i\right)\norm{u}\norm{v}}{|\trans{u}v|}.
		\end{equation}
		Dividing by $|\lambda|$ gives the stated result.
		
		In case the Frobenius norm is used to bound the perturbations, we start from the expression \eqref{eq:lambda_deriv_expression}, and use the fact that $E_i$ is symmetric for $i=1,\dots,m$, to obtain
		\begin{equation}
			|\trans{u}v||\lambda'| \leq \sum_{i=1}^m|f_i(v)||\trans{u}E_iv| = \sum_{i=1}^m|f_i(v)||\trace(\trans{u}E_iv)| = \sum_{i=1}^m|f_i(v)||\trace(\trans{E_i}(v\trans{u}))|,
		\end{equation}
		where we used the cyclic property of the trace operator $\trace()$.
		Again, since the $E_i$ are symmetric, we can write
		\begin{equation}\label{eq:trace_trick}
			\sum_{i=1}^m|f_i(v)||\trace(\trans{E_i}(v\trans{u}))| = \sum_{i=1}^m|f_i(v)|\left|\trace\left(\trans{E_i}\left(\frac{v\trans{u} + u\trans{v}}{2}\right)\right)\right|.
		\end{equation}
		Using Cauchy-Schwarz with the Frobenius inner product then gives us the bound
		\begin{equation}\label{eq:rank_2_symm_matrix_frob_norm}
			|\trans{u}v||\lambda'| \leq \sum_{i=1}^m|f_i(v)|w_i\left\|\frac{v\trans{u} + u\trans{v}}{2}\right\|_F.
		\end{equation}
		It is straight forward to verify that $\norm{(v\trans{u} + u\trans{v})/2}_F = \norm{u}\norm{v}\beta$, so dividing both sides by $|\lambda||\trans{u}v|$ shows that we have the upper bound
		\begin{equation}
			\kappa_{\text{sym}}(\lambda) \leq \frac{\left(\sum_{i=1}^m|f_i(v)|w_i\right)\norm{u}\norm{v}\beta}{|\lambda||\trans{u}v|}.
		\end{equation}
		It remains to be shown that this bound can be attained by symmetric perturbations.
		Let
		\begin{equation}
			E_i = \frac{\sign(f_i(v))w_i}{2\beta\norm{u}\norm{v}}\left(u\trans{v} + v\trans{u}\right), \qquad i=1,\dots,m,
		\end{equation}
		then the norm inequalities are satisfied as equalities and we have 
		\begin{equation}
			\frac{|\lambda'|}{|\lambda|} = \frac{(\sum_{i=1}^m|f_i(v)|w_i)\trans{u}(u\trans{v} + v\trans{u})v}{|\lambda||\trans{u}v|2\beta\norm{u}\norm{v}} 
			% = \frac{\sum_{i=1}^m|f_i(v)|w_i\norm{u}^2\norm{v}^2 2\beta}{|\lambda||\trans{u}v|2\beta\norm{u}\norm{v}} \\
			= \frac{\left(\sum_{i=1}^m|f_i(v)|w_i\right)\norm{u}\norm{v}\beta}{|\lambda||\trans{u}v|},
		\end{equation}
		which concludes the proof.
		%\qed
	\end{proof}
	
	% vipl: no longer relevant comment, removed.
	%Results similar to Theorem~\ref{thm:sym_eigval_cond_thm} can be derived for several of the other quantities in this work, in a similar manner to how this was done above. 
	%For the sake of brevity, and to avoid unnecessary repetition, we will refrain from stating such results explicitly, and the reader can instead refer back to Table~\ref{tab:frobenius} for the appropriate scaling.
	%%%%%%%%%%%%%%%%%%%%%%%%%%%%%%%%%%%%%%%%
	%%%%%%%%%%%%%%%%%%%%%%%%%%%%%%%%%%%%%%%%
	%%%%%%%%%%%%%%%%%%%%%%%%%%%%%%%%%%%%%%%%

	% vipl: trying a table for frobenius stuff, might be overkill for what we want to say
	\begin{table}[]
		\centering
		\caption{Condition number, scaled by $\hat{\alpha}:=~(\sum_{i=1}^m|f_i(v)|w_i\norm{v}\norm{u})/(|\lambda||\trans{u}v|)$, for arbitrary and symmetric perturbations, when measured with the spectral norm, and the Frobenius norm.
			Here, $\theta$ is the angle between $u$ and $v$.}
		\label{tab:frobenius}
		\begin{tabular}{ccc}
			\toprule
			& $ \kappa(\lambda)/\hat{\alpha}$ & $\kappa_{\text{sym}}(\lambda)/\hat{\alpha}$ \\ \midrule
			$\norm{\cdot}_2$ & 1 & 1 \\
			$\norm{\cdot}_F$ & 1 & $\sqrt{\frac{1+(\cos(\theta))^2}{2}}$ \\ \bottomrule
		\end{tabular}
	\end{table}
	
	\subsection{Eigenvector condition number}
	We now proceed to study conditioning of the eigenvectors of \eqref{eq:NEPv_scale_inv}.
	The eigenvector condition number is defined as follows.
	
	\begin{definition}\label{def:eigvec_cond_definition}
		
		Assume $(\lambda, v)$ is a simple eigenpair of \eqref{eq:NEPv_scale_inv}, and let $(\lambda(\varepsilon), v(\varepsilon))$ be the unique functions satisfying \eqref{eq:perturbed_problem_constraint}.
		Given the weights $w_i \geq 0$, $i = 1, \dots, m$, the normwise eigenvector condition number is defined as
		\begin{equation}\label{eq:eigvec_cond_def}
			\kappa(v) = \sup \left\{ \frac{\norm{v'}}{\norm{v}}\::\: \norm{E_i} \leq w_i, \quad i = 1, \dots, m \right\},
		\end{equation}
		where the supremum is taken over all feasible $E_i$.
		
		%%%%%%%%%%%% OLD Definition: %%%%%%%%%%%%%%%
		% Let $(\lambda, v)$ be a simple eigenpair of \eqref{eq:NEPv_scale_inv}.
		% Then the normwise condition number of $v$, $\kappa(v)$, is defined to be
		% \begin{equation}\label{eq:eigvec_cond_def}
			% \begin{split}
				%     \kappa(v) = \limsup_{\varepsilon\rightarrow 0 }\left\{ \frac{\norm{v'(\varepsilon)}}{\norm{v}}\::\: \left(A(v(\varepsilon)) + \Delta A(v(\varepsilon))\right)v(\varepsilon) = \lambda(\varepsilon)v(\varepsilon), \right. \\
				%     \left. \phantom{\frac{|\Delta \lambda|}{\varepsilon |\lambda|}} \frac{1}{2}\trans{v(\varepsilon)}v(\varepsilon) = 1, \quad \norm{\Delta A_i}\leq \varepsilon w_i  \right\},
				% \end{split}
			% \end{equation}
		% where $w_i>0$, $i = 1,\dots,m$, are prescribed weights.
		%%%%%%%%%%%%%%%%%%%%%%%%%%%%%%%%%%%%%%%%%%%%%%
	\end{definition}
	
	Similar to the eigenvalue condition number, \Cref{def:eigvec_cond_definition} does not directly provide a computable quantity for the eigenvector condition number $\kappa(v)$. To obtain this, we first derive a formula for the eigenvector sensitivity $v'$ using the following Lemmas.
	% In order to obtain such an expression, we will derive an expression for the eigenvector sensitivity, $v'$.
	% This will subsequently be used to derive a computable expression for \eqref{eq:eigvec_cond_def}.
	% We begin with a technical lemma.
	
	\begin{lemma}
		Let $(\lambda, v)$ be a simple eigenpair of \eqref{eq:NEPv_scale_inv}.
		If $V \in \mathbb{R}^{n \times (n-1)}$ is a full rank matrix whose columns are orthogonal to $v$, then $\trans{V}(J(v) - \lambda I)V$ is nonsingular.
		\label{lem:invertibility}
	\end{lemma}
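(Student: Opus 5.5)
We argue by contradiction, reducing to \Cref{lem:nonsinglar_syst_mat}. Write $M := J(v)-\lambda I$. Since $V$ has full rank $n-1$ and its columns are orthogonal to $v$, the columns of $V$ span $v^\perp$, so $Q := [\,V \;\; v\,]$ is a nonsingular $n\times n$ matrix. Suppose $\trans{V}MV$ is singular, i.e., there is $z\neq 0$ with $\trans{V}MVz=0$. Set $w_1 := Vz$. Then $w_1\neq 0$ by full column rank, and $\trans{v}w_1=0$ by orthogonality. The condition $\trans{V}Mw_1=0$ says that $Mw_1$ is orthogonal to every column of $V$, hence $Mw_1\in(v^\perp)^\perp=\operatorname{span}\{v\}$. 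Write $Mw_1 = w_2 v$ for some scalar $w_2$.

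The vector $\trans{\begin{bmatrix}\trans{w_1} & w_2\end{bmatrix}}$ is then nonzero and satisfies
\begin{equation*}
J_F(\lambda,v)\begin{bmatrix} w_1 \\ w_2\end{bmatrix} = \begin{bmatrix} Mw_1 - w_2 v \\ -\trans{v}w_1\end{bmatrix} = 0 .
\end{equation*}
So $J_F(\lambda,v)$ in \eqref{eq:eigvec_cond_mat_prelims} is singular. Because $(\lambda,v)$ is simple in the sense of \Cref{def:simple_eig}, $\lambda$ is a simple eigenvalue of $J(v)$, and \Cref{lem:nonsinglar_syst_mat} says $J_F(\lambda,v)$ is nonsingular. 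This contradiction shows $\trans{V}MV$ is nonsingular.

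The only step needing care is the passage from $\trans{V}Mw_1=0$ to $Mw_1\in\operatorname{span}\{v\}$. It relies on the columns of $V$ spanning exactly $v^\perp$, which follows from full rank $n-1$ together with orthogonality to $v$; no orthonormality of $V$ is needed. If one preferred to avoid \Cref{lem:nonsinglar_syst_mat}, the fallback would be to argue directly with the left eigenvector $u$: apply $\trans{u}$ to $Mw_1=w_2v$ to get $w_2\,\trans{u}v=0$, hence $w_2=0$ since $\trans{u}v\neq0$; then simplicity forces $w_1\in\operatorname{span}\{v\}$, which together with $\trans{v}w_1=0$ gives $w_1=0$. This is exactly the same reasoning as in that lemma's proof, so invoking the lemma is cleaner.
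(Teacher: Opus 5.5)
Your proof is correct and is essentially the paper's argument. The paper likewise derives $(J(v)-\lambda I)Vy=\alpha v$ from the singularity of $\trans{V}(J(v)-\lambda I)V$, but finishes inline by applying $\trans{u}$ to get $\alpha=0$ and then a rank contradiction; you instead package $(Vy,\alpha)$ as a nonzero null vector of $J_F(\lambda,v)$ and cite \Cref{lem:nonsinglar_syst_mat}, whose proof contains exactly that reasoning, so your version avoids repeating it.
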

	\begin{proof}
		Assume $\trans{V}(J(v) - \lambda I)V$ is singular, then there exists a nonzero vector $y$ such that $\trans{V}(J(v) - \lambda I)Vy = 0$ which implies that $(J(v) - \lambda I)Vy = \alpha v$ for some non-zero scalar $\alpha \in \mathbb{R}$.
		Multiplying from the left with the left eigenvector $\trans{u}$ of $J(v)$ corresponding to the eigenvalue $\lambda$ gives $\alpha \trans{u}v = 0$, so we must have $\alpha = 0$
		%$\alpha$ must equal zero 
		since $\lambda$ is simple.
		%But then the rank of $J(v) - \lambda I$ is smaller than $n-1$
		But then $\rank(J(v) - \lambda I) < n-1$
		since the two linearly independent vectors $v$ and $Vy$ lie in its right null space which gives a contradiction, meaning $\trans{V}(J(v) - \lambda I)V$ is nonsingular.
	\end{proof}
	
	\begin{lemma}\label{lem:delta_v_lemma}
		%Let $(\lambda,v)$ be an eigenpair of \eqref{eq:NEPv_scale_inv}, and suppose $\lambda$ is simple.
		Assume $(\lambda, v)$ is a simple eigenpair of \eqref{eq:NEPv_scale_inv}, and let $(\lambda(\varepsilon), v(\varepsilon))$ be the unique functions satisfying \eqref{eq:perturbed_problem_constraint}.
		Furthermore, let $u$ be the left eigenvector of $J(v)$ corresponding to $\lambda$, and $V$ a full rank matrix whose columns are orthogonal to $v$.
		Then $v'$ in \Cref{def:eigvec_cond_definition} can be expressed as 
		\begin{equation}\label{eq:deltav_expression}
			v' = - V\left(\trans{V}(J(v)-\lambda I)V\right)^{-1}\trans{V}\left(\sum_{i=1}^mf_i(v)E_iv\right)
		\end{equation}
	\end{lemma}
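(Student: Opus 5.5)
The plan is to start from the derivative relation \eqref{eq:derivatives_of_v_lambda}, whose two block rows read
\begin{equation*}
	(J(v)-\lambda I)v' - \lambda' v = -\sum_{i=1}^m f_i(v)E_iv, \qquad \trans{v}v' = 0.
\end{equation*}
The second row comes from differentiating the normalization $\frac12\trans{v(\varepsilon)}v(\varepsilon)=1$ at $\varepsilon=0$. It says that $v'$ is orthogonal to $v$.

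Since $V\in\mathbb{R}^{n\times(n-1)}$ has full rank and its columns are orthogonal to $v$, the range of $V$ is exactly $v^\perp$. Hence there is a unique $y\in\mathbb{R}^{n-1}$ with $v' = Vy$.

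Substituting this into the first row and multiplying from the left by $\trans{V}$ removes the term $\lambda' v$, because $\trans{V}v=0$. This leaves
\begin{equation*}
	\trans{V}(J(v)-\lambda I)V\,y = -\trans{V}\Big(\sum_{i=1}^m f_i(v)E_iv\Big).
\end{equation*}
By \Cref{lem:invertibility}, the matrix $\trans{V}(J(v)-\lambda I)V$ is nonsingular because $(\lambda,v)$ is simple. We can therefore solve for $y$ and multiply by $V$, which gives exactly \eqref{eq:deltav_expression}.

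The only points needing care are these two:
\begin{itemize}
	\item We must justify that $v'\in\operatorname{range}(V)$. This follows from $\trans{v}v'=0$ together with the dimension count $\dim v^\perp = n-1 = \rank V$.
	\item We should confirm that projecting with $\trans{V}$ loses no information. This is not actually needed: $v'$ is already uniquely determined by \eqref{eq:derivatives_of_v_lambda}, since $J_F$ is nonsingular by \Cref{lem:nonsinglar_syst_mat}. Any $v'$ satisfying the system must therefore satisfy the projected equation, and the projected equation has a unique solution in $\operatorname{range}(V)$.
\end{itemize}
I expect no real obstacle; the main step is simply the invocation of \Cref{lem:invertibility}.
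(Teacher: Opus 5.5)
Your proposal is correct and follows essentially the same route as the paper: use $\trans{v}v'=0$ to write $v'=Vy$, premultiply the first block row by $\trans{V}$ to eliminate the $\lambda' v$ term, and invert $\trans{V}(J(v)-\lambda I)V$ using \Cref{lem:invertibility}. The only minor difference is that the paper first substitutes the formula \eqref{eq:lambda_deriv_expression} for $\lambda'$ before projecting, while you simply note that $\trans{V}v=0$; this changes nothing of substance.
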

	
	\begin{proof}
		Expanding the first equation of \eqref{eq:derivatives_of_v_lambda} and inserting the formula for $\lambda'$ \eqref{eq:lambda_deriv_expression}, gives 
		\begin{equation}\label{eq:delta_v_lemma_J_eq}
			(J(v)-\lambda I)v' = \left(\frac{\sum_{i=1}^mf_i(v)\trans{u}E_iv}{\trans{u}v}\right)v - \sum_{i=1}^mf_i(v)E_i v.
		\end{equation}
		The second equation in \eqref{eq:derivatives_of_v_lambda} implies that $v' = Vy$ for some $y\in\mathbb{R}^{n-1}$.
		Writing $v'$ this way, and premultiplying \eqref{eq:delta_v_lemma_J_eq} by $\trans{V}$ yields
		\begin{equation}
			\trans{V}(J(v)-\lambda I)Vy = -\trans{V}\left(\sum_{i=1}^mf_i(v)E_iv\right).
		\end{equation}
		Since $\trans{V}(J(v)-\lambda I)V$ is nonsingular by \Cref{lem:invertibility}, this means we can write $v'$ finally as 
		\begin{equation}
			v' = -V\left(\trans{V}(J(v)-\lambda I)V\right)^{-1}\trans{V}\left(\sum_{i=1}^mf_i(v)E_iv\right),
		\end{equation}
		which concludes the proof.
		% \qed
	\end{proof}
	
	% vipl: commented out, not really relevant, due to sup in def
	%\begin{remark}
	%Even though the matrix $\trans{V}(J(v)-\lambda I)V$ is nonsingular in the expression for $v'$, the matrix $V(\trans{V}(J(v)-\lambda I)V)^{-1}\trans{V}$ is not and has a null space spanned by the vector $v$.
	%Consequently, $v'$ equals zero if and only if $\sum_{i=1}^mf_i(v)E_iv = \alpha v$ for some scalar $\alpha$.
	%\end{remark}
	
	Finally, we now derive a computable expression for the condition number $\kappa(v)$.
	\begin{theorem}
		Let the matrix $V\in\mathbb{R}^{n\times(n-1)}$ be as in \Cref{lem:delta_v_lemma}. 
		Then the eigenvector condition number \eqref{eq:eigvec_cond_def} is given by
		\begin{equation}\label{eq:eigvec_cond_closed_form}
			\kappa(v) = \left\| V(\trans{V}(J(v)-\lambda I)V)^{-1}\trans{V} \right\|\left(\sum_{i=1}^m|f_i(v)|w_i\right).
		\end{equation}
	\end{theorem}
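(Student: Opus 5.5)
Set $M := V(\trans{V}(J(v)-\lambda I)V)^{-1}\trans{V}$, so that \Cref{lem:delta_v_lemma} gives $v' = -M\left(\sum_{i=1}^m f_i(v)E_iv\right)$. The proof has two parts: an upper bound from submultiplicativity, and an explicit rank-one perturbation that attains it. Throughout, I take the normalization $\frac{1}{2}\trans{v}v=1$ into account. Note that $\kappa(v)$ divides by $\norm{v}$, while the perturbation $E_iv$ carries a factor $\norm{v}$; these two factors cancel, which is why no $\norm{v}$ appears in \eqref{eq:eigvec_cond_closed_form}.

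For the upper bound, I take norms in \eqref{eq:deltav_expression} and use the triangle inequality:
\begin{equation*}
\norm{v'} \leq \norm{M}\,\Big\|\sum_{i=1}^m f_i(v)E_iv\Big\| \leq \norm{M}\left(\sum_{i=1}^m |f_i(v)|\,\norm{E_i}\right)\norm{v} \leq \norm{M}\left(\sum_{i=1}^m |f_i(v)|w_i\right)\norm{v}.
\end{equation*}
Here $\norm{M}$ is the spectral norm, i.e.\ the operator norm induced by the vector 2-norm. The middle step uses $\norm{E_iv}\le \norm{E_i}\norm{v}$, which holds for both the spectral and the Frobenius norm. Dividing by $\norm{v}$ gives $\kappa(v)\le$ the right-hand side of \eqref{eq:eigvec_cond_closed_form}.

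For attainment, I pick a unit vector $z$ with $\norm{Mz}=\norm{M}$, for example a top right singular vector of $M$. I then set
\begin{equation*}
E_i = -\sign(f_i(v))\,w_i\,\frac{z\trans{v}}{\norm{v}}, \qquad i=1,\dots,m.
\end{equation*}
Each $E_i$ is rank one with $\norm{E_i}_2=\norm{E_i}_F=w_i$, so it is feasible for either norm. This choice gives $\sum_i f_i(v)E_iv = -\left(\sum_i|f_i(v)|w_i\right)\norm{v}\,z$, and therefore $v' = \left(\sum_i|f_i(v)|w_i\right)\norm{v}\,Mz$. Its norm equals the upper bound, so the supremum is attained.

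The main point to check is that the maximizing direction is not lost, i.e.\ that it does not lie along $v$. The matrix $M$ annihilates $v$, since $\trans{V}v=0$, and its range lies in $v^\perp$. A top right singular vector $z$ therefore lies in $\mathrm{range}(V)$, and no component is wasted. Beyond this, one only needs to note that the formula is independent of the choice of $V$: replacing $V$ by $VG$ with $G$ invertible leaves $M$ unchanged. Consequently $\norm{M}$ is well defined for any full-rank $V$ whose columns are orthogonal to $v$, as the statement requires.
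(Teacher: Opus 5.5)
Your proof is correct and follows the paper's argument. The upper bound comes directly from the expression for $v'$ in \Cref{lem:delta_v_lemma}. Attainment uses the same rank-one perturbations $E_i \propto \sign(f_i(v))\,w_i\, z\trans{v}/\norm{v}$, with $z$ a unit vector satisfying $\norm{Mz}=\norm{M}$; your extra minus sign makes no difference. Two side remarks: your observation that $z \in v^\perp$ is not needed for this theorem, although the paper does use it later for the symmetric Frobenius-norm case; and your check that $M$ does not depend on the choice of $V$ is a correct addition that the paper leaves implicit.
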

	\begin{proof}
		Using the formula for $v'$ from \Cref{lem:delta_v_lemma} directly shows that \eqref{eq:eigvec_cond_closed_form} is an upper bound for the condition number.
		Let $Z = V(\trans{V}(J(v)-\lambda I)V)^{-1}\trans{V}$.
		We show that the upper bound is attained by considering the specific perturbations %$\Delta A_i = \varepsilon E_i$, where
		\begin{equation}
			E_i = \sign(f_i(v))w_i H,
		\end{equation}
		where the matrix $H=p\trans{v}/\norm{v}$ is chosen such that $\norm{Zp}=\norm{Z}$ and $\norm{p}=1$.
		Then the norm-inequalities in \eqref{eq:eigvec_cond_def} are satisfied as equalities for both the spectral and Frobenius norm, and
		\begin{equation}
			\norm{v'} = \norm{Z}\left(\sum_{i=1}^m|f_i(v)|w_i\right)\norm{v}.
		\end{equation}
		Dividing by $\norm{v}$ shows the desired equality, and the result follows.
		% \qed
	\end{proof}

	Similar to the eigenvalue condition number, the perturbations in \Cref{def:eigvec_cond_definition} do not respect the symmetric structure of \eqref{eq:NEPv_scale_inv}.
	Therefore, we now define the symmetric eigenvector condition number similar to \Cref{def:sym_eigval_cond_def}.
	
	\begin{definition}\label{def:symm_eigvec_cond_definition}
		Assume $(\lambda, v)$ is a simple eigenpair of \eqref{eq:NEPv_scale_inv}, and let $(\lambda(\varepsilon), v(\varepsilon))$ be the unique functions satisfying \eqref{eq:perturbed_problem_constraint}.
		Given the weights $w_i \geq 0$, $i = 1, \dots, m$, the symmetric normwise eigenvector condition number is defined as
		\begin{equation}\label{eq:symm_eigvec_cond_def}
			\kappa_{\text{sym}}^{(*)}(v) = \sup \left\{ \frac{\norm{v'}}{\norm{v}}\::\: \norm{E_i}_* \leq w_i, \quad E_i = \trans{E_i}, \quad i = 1, \dots, m \right\},
		\end{equation}
		where the supremum is taken over all feasible $E_i$, and where $*$ is either $2$ or $F$ for the two norm and Frobenius norm, respectively.

		%%%%%%%%%%% Old Definition %%%%%%%%%%%%%%%
		% Let $(\lambda, v)$ be a simple eigenpair of \eqref{eq:NEPv_scale_inv}.
		% Then the symmetric normwise condition number of $v$, $\kappa_{\text{sym}}(v)$, is defined to be
		% \begin{equation}\label{eq:symm_eigvec_cond_def}
			% \begin{split}
				%     \kappa_{\text{sym}}(v) = \limsup_{\varepsilon\rightarrow 0 }\left\{ \frac{\norm{v'(\varepsilon)}}{\norm{v}}\::\: \left(A(v(\varepsilon)) + \Delta A(v(\varepsilon))\right)v(\varepsilon) = \lambda(\varepsilon)v(\varepsilon), \right. \\
				%     \left. \phantom{\frac{|\Delta \lambda|}{\varepsilon |\lambda|}} \frac{1}{2}\trans{v(\varepsilon)}v(\varepsilon) = 1, \quad \norm{\Delta A_i}\leq \varepsilon w_i, \quad \trans{(\Delta A_i)} = \Delta A_i  \right\},
				% \end{split}
			% \end{equation}
		% where $w_i>0$, $i = 1,\dots,m$, are prescribed weights.
		%%%%%%%%%%%%%%%%%%%%%%%%%%%%%%%%%%%%%%%%%%%%%%
	\end{definition}
	
	As was the case for the eigenvalue condition number, it turns out that enforcing symmetric perturbations as is done in \Cref{def:symm_eigvec_cond_definition}, has no impact on the expression for the eigenvector condition number, when one uses the spectral norm to measure their sizes. Using the Frobenius norm however results in a smaller condition number.
	
	\begin{theorem}
		Let $(\lambda,v)$ be as in \Cref{def:symm_eigvec_cond_definition}, then 
		\begin{equation}
			\kappa_{\text{sym}}^{(2)}(v) = \kappa(v), \quad \text{and} \quad \kappa_{\text{sym}}^{(F)}(v) = \frac{1}{\sqrt{2}} \kappa(v).
		\end{equation}
	\end{theorem}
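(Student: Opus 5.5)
The plan is to work directly from the formula for $v'$ in \Cref{lem:delta_v_lemma}. Write $Z := V(\trans{V}(J(v)-\lambda I)V)^{-1}\trans{V}$ and $\hat v := v/\norm{v}$, so that $v' = -Z\sum_{i=1}^m f_i(v)E_iv$. Two structural facts about $Z$ will be used throughout. Since $\trans{V}v = 0$, we have $Zv=0$. Writing $P := I-\hat v\trans{\hat v}$ for the orthogonal projector onto $v^\perp$, this gives $Z = ZP$. Because the symmetric perturbations form a subset of all real perturbations, the upper bound $\kappa_{\text{sym}}^{(*)}(v)\le\kappa(v)$ is immediate from the preceding theorem. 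In each case the task is therefore either to show attainment, or, for the Frobenius norm, to sharpen the bound and then show attainment.

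\textbf{Spectral norm.} Let $p$ be a unit vector with $\norm{Zp}=\norm{Z}$. I will mimic the proof of \Cref{thm:sym_eigval_cond_thm} and take $H$ to be the Householder reflector mapping $\hat v$ to $p$, that is, the reflector defined by $\hat v - p$, with $H=I$ if $p=\hat v$. This $H$ is symmetric and orthogonal, so $\norm{H}_2=1$, and $Hv = \norm{v}p$. Choosing $E_i=\sign(f_i(v))w_iH$ then gives $\norm{v'} = \norm{Z}\left(\sum_i|f_i(v)|w_i\right)\norm{v}$. This attains the bound, so $\kappa_{\text{sym}}^{(2)}(v)=\kappa(v)$.

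\textbf{Frobenius norm.} This is the main step. The key inequality I expect to need is that every symmetric $E$ satisfies
\begin{equation*}
\norm{PE\hat v}\le \tfrac{1}{\sqrt2}\norm{E}_F.
\end{equation*}
To prove it, complete $\hat v$ to an orthonormal basis $Q=[\hat v,\ Q_2]$. Then $\norm{PE\hat v}=\norm{\trans{Q_2}E\hat v}$, which is the norm of the off-diagonal part of the first column of $\trans{Q}EQ$. By symmetry the same entries appear in the first row, so $\norm{E}_F^2=\norm{\trans{Q}EQ}_F^2\ge 2\norm{PE\hat v}^2$. Combining this with $Z=ZP$ gives
\begin{equation*}
\norm{v'}\le\sum_{i=1}^m|f_i(v)|\,\norm{Z}\,\norm{PE_iv}\le \frac{1}{\sqrt2}\norm{Z}\left(\sum_{i=1}^m|f_i(v)|w_i\right)\norm{v},
\end{equation*}
which is the claimed upper bound $\kappa(v)/\sqrt2$.

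For attainment, I first note that the maximizer $p$ of $\norm{Zp}$ can be chosen with $p\perp v$. Indeed, $Zp=ZPp$ and $\norm{Pp}\le 1$, so replacing $p$ by $Pp/\norm{Pp}$ does not decrease $\norm{Zp}$; and $Pp\neq 0$ whenever $Z\neq0$. With such a $p$, set
\begin{equation*}
E_i=\sign(f_i(v))\,w_i\,\frac{p\trans{\hat v}+\hat v\trans{p}}{\sqrt2}.
\end{equation*}
Because $p\perp\hat v$ and both are unit vectors, $\norm{E_i}_F=w_i$. Moreover $E_iv=\sign(f_i(v))\,w_i\norm{v}\,p/\sqrt2$. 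Substituting into the formula for $v'$ yields $\norm{v'}/\norm{v}=\norm{Z}\left(\sum_i|f_i(v)|w_i\right)/\sqrt2$, which gives equality. The only subtle points are the symmetric-block inequality and the observation that the maximizer $p$ can be taken orthogonal to $v$.
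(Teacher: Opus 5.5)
Your proof is correct. The spectral-norm part and the Frobenius attainment step are the same as the paper's: a Householder reflector sending $\hat v$ to $p$, and the rank-two matrix $(p\trans{v}+v\trans{p})/(\sqrt2\norm{v})$ with $p\perp v$.

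You differ from the paper in the Frobenius upper bound. The paper argues by duality. It writes $\norm{ZE_iv}^2=\trans{v}E_ip_i$ with $p_i=\trans{Z}ZE_iv$, symmetrizes inside the trace to get $\trace\bigl(E_i(p_i\trans{v}+v\trans{p_i})/2\bigr)$, and applies Cauchy--Schwarz in the Frobenius inner product. This reuses the factor $\beta$ from the eigenvalue theorem, which equals $1/\sqrt2$ because $p_i\perp v$; that orthogonality holds because the range of $\trans{Z}$ lies in the column space of $V$. The bound $\norm{p_i}\le\norm{Z}_2\norm{ZE_iv}$ then closes a self-bounding inequality.

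You use only $Zv=0$, i.e.\ $Z=ZP$, and isolate a clean geometric lemma. For symmetric $E$, the coupling block $\trans{Q_2}E\hat v$ appears twice in $\trans{Q}EQ$, so $\norm{PE\hat v}\le\norm{E}_F/\sqrt2$. Your route is more elementary. It also makes transparent that only the component of $E_iv$ orthogonal to $v$ matters, and that an extremal $E_i$ must have vanishing $\hat v$-diagonal entry and vanishing $Q_2$-block in the $\hat v$-adapted basis. The paper's route has the advantage of running parallel to its eigenvalue proof and its $\beta$ formula.

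One small remark: your replacement $p\mapsto Pp/\norm{Pp}$ is harmless but unnecessary. Your own argument shows that $\norm{Pp}<1$ would give $\norm{Z(Pp/\norm{Pp})}>\norm{Z}$, so every maximizer is already orthogonal to $v$. This is the paper's observation that $p$ is a top right singular vector of $Z$ while $v$ lies in the null space of $Z$.
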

	
	\begin{proof}
		For the matrix two norm, we still have the upper bound on the condition number derived from the formula for $v'$ in \Cref{lem:delta_v_lemma}.
		It remains to be shown that this bound can be attained by considering specific symmetric perturbations.
		Let $Z = V(\trans{V}(J(v)-\lambda I)V)^{-1}\trans{V}$, and take $p\in\mathbb{R}^n$ such that $\norm{Zp}=\norm{Z}_2$ and $\norm{p}=1$.
		Furthermore, let $\hat{v} = v/\norm{v}$, and define $H$ as the Householder reflector
		\begin{equation}
			H = I-2\frac{(p-\hat{v})\trans{(p-\hat{v})}}{\trans{(p-\hat{v})}(p-\hat{v})}.
		\end{equation}
		Let the perturbations $E_i$, $i=1,\dots,m$ be defined by
		\begin{equation}
			E_i = \sign(f_i(v))w_iH,
		\end{equation}
		then the perturbations are clearly symmetric, and the norm inequalities in \Cref{def:symm_eigvec_cond_definition} are satisfied as equalities.
		Substituting these perturbations into the expression for $v'$ \eqref{eq:deltav_expression} gives
		\begin{equation}
			v' = -Zp\sum_{i=1}^m|f_i(v)|w_i\norm{v}
		\end{equation}
		Taking the norm on both sides of this expression, and dividing by $\norm{v}$ gives the first stated result.
		
		For the Frobenius norm, a stricter upper bound can be obtained starting from the eigenvector sensitivity \eqref{eq:deltav_expression} as
		\begin{equation}\label{eq:eigvec_sensitivity_frobenius}
			\norm{v'} = \norm{Z \sum_{i=1}^m f_i(v) E_i v} \leq \sum_{i=1}^m  |f_i(v)| \norm{Z E_i v}.
		\end{equation}
		Let $p_i = \trans{Z}ZE_iv$, then
		\begin{align}
			\norm{Z E_i v}^2 = \trans{v} E_i p_i = \trace \left(E_i \frac{p_i \trans{v} + v \trans{p_i}}{2} \right) & \leq w_i \norm{v} \norm{p_i} \beta \\
			& \leq w_i \norm{v} \norm{Z}_2 \norm{Z E_i v} \beta,
		\end{align}
		similar to the derivations in \eqref{eq:trace_trick} and \eqref{eq:rank_2_symm_matrix_frob_norm}. Here, $\beta$ is defined as in \eqref{eq:beta_factor} with $\theta$ the angle between $p_i$ and $v$, but as these vectors are orthogonal, $\beta$ must be equal to $1/\sqrt{2}$, hence
		\begin{equation}
			\norm{Z E_i v} \leq \frac{1}{\sqrt{2}}w_i \norm{v} \norm{Z}_2.
		\end{equation}
		Using this inequality in \eqref{eq:eigvec_sensitivity_frobenius} and dividing by $\norm{v}$ yields
		\begin{equation}
			\frac{\norm{v'}}{\norm{v}} \leq \frac{1}{\sqrt{2}} \norm{Z}_2 \sum_{i=1}^m |f_i(v)| w_i = \frac{1}{\sqrt{2}} \kappa(v),
		\end{equation}
		which gives an upper bound on $\kappa_{sym}^{(F)}(v)$. We show that this bound is attained for the symmetric perturbations
		\begin{equation}
			E_i = \sign(f_i(v)) w_i H, \quad \text{with} \quad H = \frac{p\trans{v} + v \trans{p}}{\sqrt{2}\norm{v}},
		\end{equation}
		and where $p$ is a vector such that $\norm{p} = 1$ and $\norm{Zp} = \norm{Z}_2$. Note that $p$ is the right singular vector of $Z$ corresponding to its largest singular value, and since $v$ is orthogonal to $Z$, it is also orthogonal to $p$. As a result,
		\begin{equation}
			\norm{E_i}_F = w_i \norm{H}_F = w_i,
		\end{equation}
		and 
		\begin{align}
			\norm{v'} & = \left\| Z \sum_{i=1}^m |f_i(v)| w_i H v \right\| = \norm{ZHv} \sum_{i=1}^m |f_i(v)| w_i, \\
			& = \frac{\norm{v}}{\sqrt{2}} \norm{Zp} \sum_{i=1}^m |f_i(v)| w_i = \frac{\norm{v}}{\sqrt{2}} \norm{Z}_2 \sum_{i=1}^m |f_i(v)| w_i.
		\end{align}
		Dividing by $\norm{v}$ gives the upper bound, which concludes the proof.
		% \qed
	\end{proof}
	
	\begin{remark}
		For linear eigenvalue problems, the eigenvector condition number is closely related to the \textit{spectral gap} of the corresponding eigenvalue \cite{higham1998structured}\cite{Saad:2011:EIGBOOK}.
		That is, the eigenvector conditioning (for linear symmetric problems) is inversely proportional to the distance between the eigenvalue associated with $v$ and the next closest eigenvalue.
		It turns out that a similar statement is true for some special cases of \eqref{eq:NEPv_scale_inv}.
		If $J(v)$ is diagonalizable with eigenvalues $\lambda, \lambda_1, \dots, \lambda_{n-1}$ and corresponding left eigenvectors $u, u_1, \dots, u_{n-1}$, then the eigenvector condition number \eqref{eq:eigvec_cond_closed_form} can be expressed as
		\begin{align*}
			\kappa(v) = \left\| \sum_{j=1}^{n-1} \frac{\Tilde{u}_j \trans{u_j}}{\lambda_j - \lambda} \right\| \left(\sum_{i=1}^m|f_i(v)|w_i \right),
		\end{align*}
		with $\Tilde{u}_j$ the $j$th column of $U(\trans{U}U)^{-1}$ where $U = [u_1, \dots, u_{n-1}]$.
		To see this, note that both $V$ and $U$ are full rank matrices in $\mathbb{C}^{n \times (n-1)}$ with the eigenvector $v$ in their left nullspace.
		This implies that there exists an invertible matrix $T \in \mathbb{C}^{(n-1) \times (n-1)}$ such that $V = UT$. Substituting this in \eqref{eq:eigvec_cond_closed_form} and using the fact that $\trans{U}J(v) = \Lambda \trans{U}$ with $\Lambda = \diag(\lambda_1,\dots,\lambda_{n-1})$ gives
		\begin{align*}
			\kappa(v) = \left\| U(\trans{U}U)^{-1} (\Lambda - \lambda I)^{-1} \trans{U} \right\| \left(\sum_{i=1}^m|f_i(v)|w_i \right).
		\end{align*}
		Moreover, in case the two-norm is used and $J(v)$ is symmetric, we get
		\begin{align*}
			\kappa(v) = \frac{1}{|\lambda_{\hat{\jmath}} - \lambda|} \left(\sum_{i=1}^m|f_i(v)|w_i \right),
		\end{align*}
		where $\lambda_{\hat{\jmath}}$ is the eigenvalue closest to $\lambda$, similar to the eigenvector conditioning of standard symmetric eigenvalue problems \cite[equation (3.45)]{Saad:2011:EIGBOOK}.
		However, the property that $J(v)$ is symmetric is not guaranteed, even if $A(v)$ is symmetric, meaning the connection between the inverse spectral gap and the eigenvector condition number is generally more complicated than for linear problems.
	\end{remark}

	\section{Backward error}\label{sec:back_err}

	Having derived expressions for different quantities related to the conditioning of \eqref{eq:NEPv_scale_inv} in the previous sections, we now proceed to develop the other important concept considered in this work, namely backward errors for \eqref{eq:NEPv_scale_inv}.
	Again, as for the eigenvalue and eigenvector condition numbers, we consider perturbations to \eqref{eq:NEPv_scale_inv} of the form
	\begin{equation}
		\Delta A(v, \varepsilon) = \varepsilon \sum_{i=1}^m f_i(v) E_i,
	\end{equation}
	where the matrices $E_i$ will satisfy norm constraints, and the parameter $\varepsilon$ corresponds to the size of the perturbation.
	We begin by defining the backward error associated with an approximate eigenpair $(\tilde{\lambda}, \tilde{v})$, and subsequently derive a computable expression for this quantity.
	
	\begin{definition}\label{def:back_err_nonlin_def}
		Let $(\tilde{\lambda}, \tilde{v})$ be an approximate eigenpair of \eqref{eq:NEPv_scale_inv},
		then the backward error associated with this pair is defined as
		\begin{equation}\label{eq:back_error_def}
			\eta(\tilde{\lambda}, \tilde{v}) = \min \left\{ |\varepsilon|  \::\: \left(A(\tilde{v}) + \Delta A(\tilde{v}, \varepsilon)\right)\tilde{v} = \tilde{\lambda}\tilde{v},\: \norm{E_i} \leq w_i \right\},
		\end{equation}
		with $w_i\geq0$, $i=1,\dots,m$ prescribed weights, and where the minimum is taken over $\varepsilon$ and the matrices $E_i$.
	\end{definition}
	
	% We derive a computable expression for the backward error.
	\begin{theorem}\label{thm:back_err_nonlin}
		Let $(\tilde{\lambda}, \tilde{v})$ be an approximate eigenpair of \eqref{eq:NEPv_scale_inv}, and let $r=A(\tilde{v})\tilde{v}-\tilde{\lambda}\tilde{v}$ be the associated residual.
		Then the backward error \eqref{eq:back_error_def} is given by
		\begin{equation}
			\eta(\tilde{\lambda}, \tilde{v})  = \frac{\norm{r}}{\sum_{i=1}^m|f_i(\tilde{v})|w_i\norm{\tilde{v}}}.
		\end{equation}
	\end{theorem}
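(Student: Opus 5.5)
The plan is to prove the formula by establishing a lower bound valid for every feasible perturbation, and then constructing a perturbation that attains it. This mirrors the standard backward error argument for linear and eigenvalue-nonlinear problems (cf.\ \cite{tisseur2000backward}).

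\emph{Lower bound.} Suppose $\varepsilon$ and matrices $E_i$ with $\norm{E_i}\leq w_i$ satisfy $(A(\tilde{v})+\Delta A(\tilde{v},\varepsilon))\tilde{v}=\tilde{\lambda}\tilde{v}$. Rearranging this constraint gives
\begin{equation*}
	\varepsilon\sum_{i=1}^m f_i(\tilde{v})E_i\tilde{v} = -r.
\end{equation*}
Taking norms, applying the triangle inequality and using $\norm{E_i\tilde{v}}\leq\norm{E_i}\norm{\tilde{v}}\leq w_i\norm{\tilde{v}}$, we obtain
\begin{equation*}
	\norm{r}\leq|\varepsilon|\Big(\sum_{i=1}^m|f_i(\tilde{v})|w_i\Big)\norm{\tilde{v}}.
\end{equation*}
This holds for both the spectral and the Frobenius norm, since the Frobenius norm dominates the spectral norm. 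Consequently every feasible $\varepsilon$ satisfies $|\varepsilon|\geq \norm{r}/\big(\sum_i|f_i(\tilde{v})|w_i\norm{\tilde{v}}\big)$.

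\emph{Attainment.} Define
\begin{equation*}
	E_i = -\sign(f_i(\tilde{v}))\,w_i\,\frac{r\trans{\tilde{v}}}{\norm{r}\norm{\tilde{v}}},
	\qquad
	\varepsilon=\frac{\norm{r}}{\sum_{i=1}^m|f_i(\tilde{v})|w_i\norm{\tilde{v}}}.
\end{equation*}
Each $E_i$ is rank one, so $\norm{E_i}=w_i$ in both the spectral and Frobenius norms. Furthermore,
\begin{equation*}
	\varepsilon\sum_{i=1}^m f_i(\tilde{v})E_i\tilde{v} = -\varepsilon\Big(\sum_{i=1}^m|f_i(\tilde{v})|w_i\Big)\frac{\norm{\tilde{v}}}{\norm{r}}\,r = -r,
\end{equation*}
so the perturbed equation holds exactly. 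The lower bound is therefore attained, and the infimum is a minimum, as Definition~\ref{def:back_err_nonlin_def} requires.

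\emph{Degenerate cases.} Two cases need separate treatment. If $r=0$, then $\varepsilon=0$ is feasible and the formula gives $0$. If $\sum_i|f_i(\tilde{v})|w_i=0$ while $r\neq0$, no perturbation is feasible; the formula should then be read as $+\infty$, consistent with the lower bound. These edge cases, together with the norm-independence just checked, are the only points that need care; the rest of the argument is routine.
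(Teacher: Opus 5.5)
Your proposal is correct and follows the paper's proof essentially verbatim: you use the same triangle-inequality lower bound obtained from $r=-\varepsilon\sum_i f_i(\tilde{v})E_i\tilde{v}$, and the same rank-one attaining perturbation $E_i=-\sign(f_i(\tilde{v}))w_i r\trans{\tilde{v}}/(\norm{r}\norm{\tilde{v}})$ with $\varepsilon$ equal to the bound. Your explicit handling of the degenerate cases $r=0$ and $\sum_i|f_i(\tilde{v})|w_i=0$ is a small addition that the paper leaves implicit; note only that if both quantities vanish, the formula reads $0/0$, while the backward error itself is $0$.
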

	
	\begin{proof}
		It is straight-forward to establish the lower bound given by
		\begin{equation}\label{eq:back_err_lower_bound}
			|\varepsilon| \geq \frac{\norm{r}}{\sum_{i=1}^m|f_i(\tilde{v})|w_i\norm{\tilde{v}}}.
		\end{equation}
		This bound is attained from the first constraint when considering the feasible perturbations
		\begin{equation}\label{eq:back_err_perts}
			E_i = -\sign(f_i(\tilde{v})) w_i \frac{r\trans{\tilde{v}}}{\norm{r}\norm{\tilde{v}}} \qquad \text{and} \qquad \varepsilon = \frac{\norm{r}}{\sum_{i=1}^m|f_i(\tilde{v})|w_i\norm{\tilde{v}}},
		\end{equation}
		from which the statement follows.
		% \qed
	\end{proof}
	
	Similar to the first definition of conditioning of the eigenvalues of \eqref{eq:NEPv_scale_inv}, \Cref{def:back_err_nonlin_def} does not respect the symmetric structure of \eqref{eq:NEPv_scale_inv}. 
	To remedy this, we define a backward error for which we require the optimal perturbations to be symmetric. 
	
	\begin{definition}\label{def:symm_back_err_nonlin_def}
		Let $(\tilde{\lambda}, \tilde{v})$ be an approximate eigenpair of \eqref{eq:NEPv_scale_inv}.
		Then the symmetric backward error associated with this pair is defined as
		\begin{equation}\label{eq:back_error_def_sym}
			\eta_{\text{sym}}^{(*)}(\tilde{\lambda}, \tilde{v}) = \min \left\{ |\varepsilon| \::\: \left(A(\tilde{v}) + \Delta A(\tilde{v}, \varepsilon)\right)\tilde{v} = \tilde{\lambda}\tilde{v},\: \norm{E_i}_* \leq  w_i,\: \trans{E_i} = E_i \right\},
		\end{equation}
		with $w_i\geq 0$, $i=1,\dots,m$ prescribed weights, where the minimum is taken over $\varepsilon$ and the matrices $E_i$, and where $*$ is either 2 or $F$ for the spectral and Frobenius norm, respectively.
	\end{definition}
	
	Analogous to the relation between the condition number $\kappa(\lambda)$ and the symmetric condition number $\kappa_{\text{sym}}(\lambda)$, it turns out that the backward error remains unchanged when requiring the perturbations to be symmetric and when their sizes are measured by the spectral norm. Using the Frobenius norm on the other hand, results in the inequality $\eta_{\text{sym}}^{(F)}(\Tilde{\lambda}, \Tilde{v}) \geq \eta(\Tilde{\lambda}, \Tilde{v})$. We state these relations more precisely in \Cref{thm:symm_back_err_nonlin}.
	
	\begin{theorem}\label{thm:symm_back_err_nonlin}
		Let $(\tilde{\lambda},\tilde{v})$ and $r$ be as in \Cref{thm:back_err_nonlin} and let $\vartheta$ be the angle between $r$ and $\tilde{v}$, i.e., $\vartheta \coloneq \arccos((\trans{r}\tilde{v})/(\norm{r}\norm{\tilde{v}}))$, then
		\begin{equation}
			\eta_{\text{sym}}^{(2)}(\tilde{\lambda}, \tilde{v}) = \eta(\tilde{\lambda}, \tilde{v}), \quad \text{and} \quad \eta_{\text{sym}}^{(F)}(\tilde{\lambda}, \tilde{v}) = \eta(\tilde{\lambda}, \tilde{v}) \gamma,
		\end{equation}
		where
		\begin{equation}
			\gamma \coloneq \sqrt{1 + \sin^2(\vartheta)}.
		\end{equation}
		%Then $\eta_{\text{sym}}(\tilde{\lambda}, \tilde{v}) = \eta(\tilde{\lambda}, \tilde{v})$, if $\norm{\cdot}_2$ is used in Definition~\ref{def:symm_back_err_nonlin_def}.
	\end{theorem}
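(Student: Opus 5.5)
The plan is to reduce both claims to a statement about the single aggregated matrix
\begin{equation*}
G := \varepsilon\sum_{i=1}^m f_i(\tilde{v})E_i .
\end{equation*}
The constraint in \eqref{eq:back_error_def_sym} reads $G\tilde{v} = -r$, with $r=A(\tilde{v})\tilde{v}-\tilde{\lambda}\tilde{v}$. Symmetric $E_i$ make $G$ symmetric, and the triangle inequality gives $\norm{G}_* \leq |\varepsilon|\, s$ with $s := \sum_{i=1}^m |f_i(\tilde{v})|w_i$. Conversely, any symmetric $G_0$ with $G_0\tilde{v}=-r$ can be realized by taking $E_i = \sign(f_i(\tilde{v}))\,w_i\, G_0/\norm{G_0}_*$ and $\varepsilon = \norm{G_0}_*/s$. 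These $E_i$ are symmetric, meet $\norm{E_i}_*\le w_i$ with equality, and give $\varepsilon\sum_i f_i(\tilde{v})E_i = \varepsilon s\, G_0/\norm{G_0}_* = G_0$. Hence in both norms
\begin{equation*}
\eta_{\text{sym}}^{(*)}(\tilde{\lambda},\tilde{v}) = \frac{1}{s}\min\left\{\norm{G}_* \::\: G=\trans{G},\ G\tilde{v}=-r\right\},
\end{equation*}
and the problem becomes a minimal-norm symmetric ``mapping'' problem. The case $r=0$ is trivial, so I assume $r\neq 0$ throughout.

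For the spectral norm, every feasible $G$ satisfies $\norm{G}_2\geq \norm{r}/\norm{\tilde{v}}$, which recovers the lower bound of \Cref{thm:back_err_nonlin}. I expect this bound to be attained, as in the proof of \Cref{thm:sym_eigval_cond_thm}, by a scaled Householder reflector. With $\hat{v}=\tilde{v}/\norm{\tilde{v}}$ and $\hat{r}=r/\norm{r}$, let $H$ be the reflector determined by $\hat{v}+\hat{r}$, so that $H\hat{v}=-\hat{r}$. In the degenerate case $\hat{v}=-\hat{r}$, take $H=I$ instead. Then $G_0=\frac{\norm{r}}{\norm{\tilde{v}}}H$ is symmetric, satisfies $G_0\tilde{v}=-r$, and has $\norm{G_0}_2=\norm{r}/\norm{\tilde{v}}$. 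This gives $\eta_{\text{sym}}^{(2)}=\eta$.

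For the Frobenius norm, I will work in an orthonormal basis $Q=[\hat{v},Q_2]$ and write $\trans{Q}GQ = \begin{bmatrix} g_{11} & \trans{g} \\ g & G_{22}\end{bmatrix}$. The constraint fixes the first column, $g_{11}=-\trans{\hat{v}}r/\norm{\tilde{v}}$ and $g=-\trans{Q_2}r/\norm{\tilde{v}}$. Symmetry then fixes the first row as well. Therefore
\begin{equation*}
\norm{G}_F^2 \geq g_{11}^2 + 2\norm{g}^2 = \frac{2\norm{r}^2}{\norm{\tilde{v}}^2}-\frac{(\trans{r}\tilde{v})^2}{\norm{\tilde{v}}^4} = \frac{\norm{r}^2}{\norm{\tilde{v}}^2}\bigl(2-\cos^2\vartheta\bigr) = \frac{\norm{r}^2}{\norm{\tilde{v}}^2}\gamma^2,
\end{equation*}
with equality exactly when $G_{22}=0$. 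The minimizer is therefore
\begin{equation*}
G_0 = -\frac{r\trans{\tilde{v}}+\tilde{v}\trans{r}}{\norm{\tilde{v}}^2} + \frac{(\trans{r}\tilde{v})\,\tilde{v}\trans{\tilde{v}}}{\norm{\tilde{v}}^4}.
\end{equation*}
One checks directly that $G_0\tilde{v}=-r$ and that $\norm{G_0}_F=\gamma\norm{r}/\norm{\tilde{v}}$. Combined with the reduction above, this gives $\eta_{\text{sym}}^{(F)} = \gamma\,\eta$.

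The main obstacle is the reduction step, and specifically the claim that replacing the individual constraints $\norm{E_i}\le w_i$ by the single bound $\norm{G}\le|\varepsilon| s$ loses nothing. The lower bound relies on the triangle inequality. The upper bound relies on the explicit choice of all $E_i$ proportional to a common matrix $G_0$, with signs chosen so that the sum adds up coherently. Once this step is in place, the rest is the elementary block computation above together with a routine verification of the Householder mapping property.
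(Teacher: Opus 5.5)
Your proof is correct. The spectral-norm half matches the paper up to a sign convention. The paper builds its reflector from $\hat r-\hat v$, so that $H\tilde v=\frac{\norm{\tilde v}}{\norm{r}}r$, and puts the minus sign into $E_i=-\sign(f_i(\tilde v))w_iH$; you reflect onto $-\hat r$ instead. Your degenerate-case remark is a small improvement, since the paper's $H$ is undefined when $\hat r=\hat v$.

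The Frobenius half reaches the lower bound by a different route. The paper never isolates the aggregated matrix $G$. It pairs the constraint $r=-\varepsilon\sum_i f_i(\tilde v)E_i\tilde v$ with the fixed vector $2r-\frac{\trans{r}\tilde v}{\norm{\tilde v}^2}\tilde v$. Using symmetry of the $E_i$, it rewrites $\norm{r}^2\gamma^2$ as $\varepsilon\sum_i f_i(\tilde v)\trace(E_iM)$, where $M=\frac{\trans{r}\tilde v}{\norm{\tilde v}^2}\tilde v\trans{\tilde v}-\tilde v\trans{r}-r\trans{\tilde v}$. It then applies Cauchy--Schwarz in the Frobenius inner product with $\norm{M}_F=\norm{\tilde v}\norm{r}\gamma$. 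This is a dual-certificate argument in the same style as its proof of \Cref{thm:sym_eigval_cond_thm}. The optimal perturbation is then taken proportional to $M$, and the resulting aggregated matrix is exactly your $G_0=M/\norm{\tilde v}^2$.

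Your block decomposition in the basis $[\hat v,Q_2]$ derives the extremal matrix rather than requiring one to guess $M$. It also explains where $\gamma^2=1+\sin^2\vartheta$ comes from: the part of the forced first column orthogonal to $\tilde v$, of squared size $\sin^2\vartheta\,\norm{r}^2/\norm{\tilde v}^2$, is counted twice by symmetry. As a bonus, it shows the optimal aggregated perturbation is unique ($G_{22}=0$). The paper's certificate is shorter once $M$ is known and works on the individual $E_i$ directly. Your up-front reduction to a single symmetric mapping problem makes explicit what the paper only uses implicitly: the $m$ separate norm constraints are equivalent to one constraint on $G$. Since that reduction needs only the triangle inequality and homogeneity, it covers both norms at once.
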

	\begin{proof}
		For the backward error where the perturbations are bounded using the spectral norm, we still have the lower bound \eqref{eq:back_err_lower_bound}. 
		It remains to be shown that this bound can be attained by a symmetric perturbation.
		% Let $r = A(\tilde{v})\tilde{v}-\tilde{\lambda}\tilde{v}$ be the residual associated with the pair $(\tilde{\lambda},\tilde{v})$.
		As in the proof of \Cref{thm:sym_eigval_cond_thm}, let $H\in\mathbb{R}^{n\times n}$ be the Householder reflector defined by the vector $(r/\norm{r}-\tilde{v}/\norm{\tilde{v}})$.
		Then the matrix $H$ has the property
		\begin{equation}
			H\tilde{v} = \frac{\norm{\tilde{v}}}{\norm{r}}r.
		\end{equation}
		Consider then the feasible perturbations defined by 
		\begin{equation}
			E_i = -\sign(f_i(\tilde{v}))w_iH.
		\end{equation}
		Clearly, $E_i$ is symmetric for $i=1,\dots,m$, and it is easy to verify that these perturbations satisfy the constraints in \Cref{def:symm_back_err_nonlin_def} with $\varepsilon$ equal to the bound in \eqref{eq:back_err_lower_bound}, which gives the stated result.
		
		In case the Frobenius norm is used to bound the perturbations, we start from the equality in \Cref{def:symm_back_err_nonlin_def}
		\begin{equation}
			r = - \varepsilon \sum_{i=1}^m f_i(\tilde{v}) E_i \tilde{v},
		\end{equation}
		and use it to write 
		\begin{align}
			\norm{r}^2 \gamma^2 
			& = \norm{r}^2 (2 - \cos^2(\theta)) = \left( 2\trans{r} - \frac{\trans{r} \tilde{v}}{\norm{\tilde{v}}^2} \trans{\tilde{v}} \right) r
			\\
			& = - \varepsilon \left( 2\trans{r} - \frac{\trans{r} \tilde{v}}{\norm{\tilde{v}}^2} \trans{\tilde{v}} \right) \sum_{i=1}^m f_i(\tilde{v})E_i \tilde{v}
			\\
			& = \varepsilon \sum_{i=1}^m f_i(\tilde{v}) \trace\left( E_i \left( \frac{\trans{r} \tilde{v}}{\norm{\tilde{v}}^2}\tilde{v} \trans{\tilde{v}} - \tilde{v}\trans{r} - r \trans{\tilde{v}} \right) \right)
			\\
			& \leq |\varepsilon| \sum_{i=1}^m |f_i(\tilde{v})| w_i \norm{\tilde{v}} \norm{r} \gamma,
		\end{align}
		where we used the fact that
		$\left\| \frac{\trans{r} \tilde{v}}{\norm{\tilde{v}}^2}\tilde{v} \trans{\tilde{v}} - \tilde{v}\trans{r} - r \trans{\tilde{v}} \right\|_F = \norm{\tilde{v}} \norm{r} \gamma.$
		By rewriting the inequality, we get the lower bound
		\begin{equation}\label{eq:back_err_frob_sym_lower_bound}
			|\varepsilon| \geq \frac{\norm{r} \gamma}{\sum_{i=1}^m |f_i(\tilde{v})| w_i \norm{\tilde{v}}} = \eta(\tilde{\lambda}, \tilde{v}) \gamma.
		\end{equation}
		The constraints in \Cref{def:symm_back_err_nonlin_def} are satisfied for the symmetric perturbations
		\begin{equation}
			E_i = \frac{\sign(f_i(\tilde{v})) w_i}{\gamma \norm{r}\norm{\tilde{v}}}\left( \frac{\trans{r} \tilde{v}}{\norm{\tilde{v}}^2}\tilde{v} \trans{\tilde{v}} - \tilde{v}\trans{r} - r \trans{\tilde{v}} \right),  
		\end{equation}
		and $\varepsilon$ equal to the lower bound in \eqref{eq:back_err_frob_sym_lower_bound}, which concludes the proof.
		% \qed
	\end{proof}
	
	Several iterative methods for \eqref{eq:NEPv_scale_inv} only compute approximations to the eigenvector.
	In such cases, the backward error may be more suitably defined as
	\begin{equation}
		\eta(\tilde{v}) = \min_{\tilde{\lambda}} \eta(\tilde{\lambda}, \tilde{v}).
	\end{equation}
	It is common practice to compute an approximation to an eigenvalue from an approximation of an eigenvector by means of the Rayleigh-quotient.
	The following theorem shows that this strategy is optimal.
	
	\begin{theorem}\label{thm:eigvec_back_err}
		Suppose $\tilde{v}$ is an approximate eigenvector of \eqref{eq:NEPv_scale_inv}.
		Then the eigenvector backward error is
		\begin{equation}
			\eta(\tilde{v}) = \min_{\tilde{\lambda}} \eta(\tilde{\lambda}, \tilde{v}) = \frac{\norm{r_\ast}}{\sum_{i=1}^m|f_i(\tilde{v})|w_i\norm{\tilde{v}}},
		\end{equation}
		where $r_\ast$ is the residual associated with the approximate eigenvalue $\tilde{\lambda}$ given by the Rayleigh quotient, i.e., 
		\begin{equation}
			\tilde{\lambda} := \frac{\trans{\tilde{v}} A(\tilde{v}) \tilde{v}}{\trans{\tilde{v}}\tilde{v}}, \quad r_\ast := A(\tilde{v})\tilde{v} - \tilde{\lambda}\tilde{v}.
		\end{equation}
	\end{theorem}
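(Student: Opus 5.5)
By \Cref{thm:back_err_nonlin}, for every fixed $\tilde{\lambda}$ we have
\begin{equation*}
\eta(\tilde{\lambda},\tilde{v}) = \frac{\norm{A(\tilde{v})\tilde{v}-\tilde{\lambda}\tilde{v}}}{\sum_{i=1}^m|f_i(\tilde{v})|w_i\norm{\tilde{v}}}.
\end{equation*}
The key observation is that the denominator depends only on $\tilde{v}$ and not on $\tilde{\lambda}$. This holds because the functions $f_i$ are evaluated at $\tilde{v}$, and the weights $w_i$ are fixed. Minimizing $\eta(\tilde{\lambda},\tilde{v})$ over $\tilde{\lambda}\in\mathbb{R}$ is therefore equivalent to minimizing the residual norm $\norm{A(\tilde{v})\tilde{v}-\tilde{\lambda}\tilde{v}}$ over $\tilde{\lambda}$. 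We assume $\sum_i|f_i(\tilde{v})|w_i\neq 0$ and $\tilde{v}\neq 0$, as is implicit in \Cref{thm:back_err_nonlin}.

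Next I would solve this one-dimensional least-squares problem. Set $b:=A(\tilde{v})\tilde{v}$. We need to minimize $\phi(\tilde{\lambda}) = \norm{b-\tilde{\lambda}\tilde{v}}^2 = \norm{b}^2 - 2\tilde{\lambda}\,\trans{\tilde{v}}b + \tilde{\lambda}^2\trans{\tilde{v}}\tilde{v}$. This is a strictly convex quadratic in $\tilde{\lambda}$, since $\tilde{v}\neq0$. Setting $\phi'(\tilde{\lambda})=0$ gives the unique minimizer $\tilde{\lambda} = \trans{\tilde{v}}b/\trans{\tilde{v}}\tilde{v}$, which is exactly the Rayleigh quotient in the statement. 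Equivalently, one can argue geometrically: $\tilde{\lambda}\tilde{v}$ ranges over $\mathrm{span}\{\tilde{v}\}$, and the closest point to $b$ is the orthogonal projection. Then $r_\ast$ is orthogonal to $\tilde{v}$, and Pythagoras gives $\norm{b-\mu\tilde{v}}^2=\norm{r_\ast}^2+(\tilde{\lambda}-\mu)^2\norm{\tilde{v}}^2\geq\norm{r_\ast}^2$ for every $\mu$.

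Substituting the minimizer into the formula from \Cref{thm:back_err_nonlin} yields the claimed expression, and the minimum is attained. There is no real obstacle. The only point deserving care is the justification that the denominator is independent of $\tilde{\lambda}$, which is what allows the minimization to pass to the numerator.
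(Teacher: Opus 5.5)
Your proposal is correct and follows the paper's proof: you use \Cref{thm:back_err_nonlin}, note that the denominator is independent of $\tilde{\lambda}$, and reduce to minimizing $\norm{A(\tilde{v})\tilde{v}-\tilde{\lambda}\tilde{v}}$, whose minimizer is the Rayleigh quotient. The only difference is that the paper cites Golub and Van Loan for the least-squares step, while you derive it directly; your derivation also shows that the symmetry of $A(\tilde{v})$, which the paper invokes, is not actually needed.
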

	\begin{proof}
		We have that 
		\begin{equation}
			\eta(\tilde{v}) = \min_{\tilde{\lambda}} \eta(\tilde{\lambda}, \tilde{v}) = \frac{1}{\sum_{i=1}^m|f_i(\tilde{v})|w_i\norm{\tilde{v}}} \min_{\tilde{\lambda}} \norm{\tilde{\lambda} \tilde{v} - A(\tilde{v})\tilde{v}}.
		\end{equation}
		Since $\tilde{v}$ is fixed and $A(\tilde{v})$ is symmetric, the minimum on the right is achieved by the Rayleigh quotient \cite[p. 453]{Golub:2013:MATRIX}.
		% \qed
	\end{proof}
	
	\begin{remark}[Eigenvalue and eigenvector backward errors]
		In contrast to generalized linear \cite{higham1998structured} and eigenvalue-nonlinear problems \cite{tisseur2000backward}, we can derive computable expressions for $\eta(\tilde{v})$, i.e., \Cref{thm:eigvec_back_err}.
		For linear and eigenvalue-nonlinear problems, there is no closed form for the analogous quantity.
		However, it seems that the eigenvalue backward error (minimizing $\eta(\tilde{\lambda}, \tilde{v})$ over all $\tilde{v}$) is in turn an intractable quantity in our setting, while it is possible to derive closed form expressions for this quantity in the context of eigenvalue (non)linear problems. 
	\end{remark}
	
	\begin{remark}[Eigenvalue extraction]
		\Cref{thm:eigvec_back_err} has important implications for iterative methods for \eqref{eq:NEPv_scale_inv} that compute eigenvector approximations.
		For other classes of linear and nonlinear eigenvalue problems, different strategies for extracting eigenvalue approximations from eigenvector approximations have been considered, see, e.g., \cite{paige1995harmritz}\cite{hochtenbach2008harmritz}, which aim to improve the approximations of eigenvalues in different situations.
		\Cref{thm:eigvec_back_err} shows that if the aim is to minimize the backward error associated with an eigenvector approximation $\tilde{v}$, then the Rayleigh-quotient extraction is optimal for \eqref{eq:NEPv_scale_inv}.
	\end{remark}
	%%%%%%%%%%%%%%%%%%%%%%%%%%%%%%%%%%%%%%%%%%%%%%%%%
	%%%%%%%%%%%%%%%%%%%%%%%%%%%%%%%%%%%%%%%%%%%%%%%%%
	%%%%%%%%%%%%%%%%%%%%%%%%%%%%%%%%%%%%%%%%%%%%%%%%%

	\section{Numerical illustrations}\label{sec:sims}
	
	\subsection{Ill-conditioning by increasing problem size}\label{sim:wilkinson}
	
	In order to illustrate our findings regarding the condition number \eqref{eq:condition_number}, and how its properties differ from the corresponding theory for linear problems, we consider a specific instance of \eqref{eq:NEPv_scale_inv}.
	Let $A_0, A_k, B_k \in\mathbb{R}^{n \times n}$, $k=1,\dots,n-1$, be symmetric matrices.
	Consider the class of symmetric quadratic problems defined by
	\begin{equation}\label{eq:wilkinson_nepv}
		A(v)v := \left(A_0 + \sum_{k=1}^{n-1}f_k(v)A_k\right)v = \lambda v,
	\end{equation}
	where the scaling-invariant functions $f_k$ are given as
	\begin{equation}
		f_k(v) = \frac{\trans{v}B_kv}{\trans{v}v}.
	\end{equation}
	Similar structures were recently studied in \cite{janssens2025linearizing}, where they also arise from the discretization of a certain nonlinear wave-equation.
	Let $A_0$ be given as $A_0 = \diag(n, n-1, \dots, 2, 1) + M$, where $M$ is defined as
	\begin{equation}
		M = \begin{bmatrix}
			-1 & -\trans{\mathbbm{1}} & 0\\
			-\mathbbm{1} & 0 & 0 \\
			0 & 0 & 0
		\end{bmatrix} \in\mathbb{R}^{n \times n}.
	\end{equation}
	Here, $\mathbbm{1}\in\mathbb{R}^{n-2}$ denotes a vector of all ones.
	Let $A_k$ be all zeros except for $(A_k)_{1,k} = (A_k)_{k,1}$ which we set equal to one.
	Let $B_k$ be such that $(B_k)_{1,1}=1$.
	Set $(B_k)_{1,k+1} = (B_k)_{k+1,1} = n/2$, and set the remaining elements of the first row and column of $B_k$ to zero. 
	Finally, the remaining elements of $B_k$ can be chosen arbitrarily.
	For instance, $A(v)$ in \eqref{eq:wilkinson_nepv} for $n=3$ becomes
	\begin{equation}
		A(v) = \begin{bmatrix}
			2 & -1 & 0 \\
			-1 & 2 & 0 \\
			0 & 0 & 1
		\end{bmatrix} + f_1(v)\begin{bmatrix}
			1\:\: & 0\:\: & 0 \\
			0\:\: & 0\:\: & 0 \\
			0\:\: & 0\:\: & 0 \\
		\end{bmatrix} + f_2(v)\begin{bmatrix}
			0\:\: & 1\:\: & 0 \\
			1\:\: & 0\:\: & 0 \\
			0\:\: & 0\:\: & 0 \\
		\end{bmatrix},
	\end{equation}
	with $B_1$ and $B_2$ given by
	\begin{equation}
		B_1 = \begin{bmatrix}
			1 & 3/2 & 0 \\
			3/2 & \cdot & \cdot \\
			0 & \cdot & \cdot
		\end{bmatrix},\: B_2 = \begin{bmatrix}
			1 & 0 & 3/2 \\
			0 & \:\cdot & \cdot \\
			3/2 & \:\cdot & \cdot
		\end{bmatrix},
	\end{equation}
	where the dots $(\:\cdot\:)$ denote arbitrary values that retain the symmetry of $B_k$.
	
	\begin{table}[]
		\centering
		\caption{Relative eigenvalue condition number, the absolute value of the inner product of the left and right eigenvectors, and $\alpha := \sum_{i=1}^m|f_i(v)|\norm{A_i}\norm{v}\norm{u}$, corresponding to the eigenvalue $\lambda=n$ of \eqref{eq:wilkinson_nepv} for increasing $n$.}
		\label{tab:wilkinson_cond_num}
		\begin{tabular}{ccccccc}
			\toprule
			$n$ & &$\kappa(\lambda)$ & & $|\trans{u}v|$     & & $\alpha$\\ \midrule
			2   & &2.2               & &$4.4\cdot10^{-1}$   & &2.0\\ 
			5   & &$6.7\cdot10^1$    & &$2.7\cdot10^{-2}$   & &9.4\\ 
			10  & &$9.0\cdot10^3$    & &$2.2\cdot10^{-4}$   & &19.9\\ 
			20  & &$1.7\cdot10^8$    & &$1.1\cdot 10^{-8}$  & &40.4\\ 
			30  & &$3.4\cdot10^{12}$ & &$5.9\cdot 10^{-13}$ & &60.6 \\ \bottomrule
		\end{tabular}
	\end{table}
	
	With these choices of matrices, it is straight-forward to verify that for every $n$, \eqref{eq:wilkinson_nepv} has the eigenpair $(\lambda, v) = (n,e_1)$, where $e_1\in\mathbb{R}^n$ is the first standard unit vector.
	In \Cref{tab:wilkinson_cond_num} we record the relative eigenvalue condition number (i.e., we set $w_i=\norm{A_i}$ in \Cref{thm:sym_eigval_cond_thm}) of this eigenvalue for a few values of $n$.
	Notice that the condition number does not include perturbations on $B_k$.
	Clearly, we observe a dramatic growth in the condition number of this eigenvalue as $n$ increases.
	In order to explain this behavior, we reference \Cref{thm:eigval_cond_thm}, and compute the Jacobian $J(v)$ for this problem.
	One can readily determine that the Jacobian is given by 
	\begin{equation}
		J(v) = A(v) + \sum_{k=1}^{n-1}A_kv\frac{\partial}{\partial v}f_k(v),\quad \text{with} \quad \frac{\partial}{\partial v} f_k(v) = \frac{2\trans{v}B_k}{\trans{v}v} - \frac{2\trans{v}B_kv}{(\trans{v}v)^2}\trans{v}.
	\end{equation}
	Now, evaluating $J(v) = J(e_1)$ reveals that 
	\begin{equation}\label{eq:wilk_matrix}
		J(e_1) = \begin{bmatrix}
			n & n &  &  &  &  \\
			& n-1 & n &  &  & \\
			& & n-2 & n & & \\
			& & & \ddots & \ddots & \\
			& & & & 2 & n \\
			& & & & & 1
		\end{bmatrix} =: W_n,
	\end{equation}
	where $W_n$ is the $n$th so-called Wilkinson matrix \cite[Chapter 2]{Wilkinson:1988:ALGEBRAIC}.
	Wilkinson used this family of matrices as an example of ''extremely ill-conditioned matrices'', due to the left and right eigenvectors of \eqref{eq:wilk_matrix} being very close to orthogonal.
	In fact, the inner product of the left and right eigenvectors associated with $\lambda = n$ is explicitly given by the expression $|\trans{u}v| = (n-1)!/n^{n-1}$ \cite{Wilkinson:1988:ALGEBRAIC}, which can be estimated as 
	\begin{equation}
		|\trans{u}v|\approx \frac{\sqrt{2\pi n}}{e^n},
	\end{equation}
	using Stirling's formula.
	Hence, by \Cref{thm:eigval_cond_thm}, the eigenvalue $\lambda = n$ of \eqref{eq:wilkinson_nepv} will become exponentially ill-conditioned as $n$ increases, which explains the dramatic behavior observed in \Cref{tab:wilkinson_cond_num}.
	This example demonstrates that even a simple symmetric problem such as \eqref{eq:wilkinson_nepv} can have highly ill-conditioned eigenvalues, in contrast to the linear case, where symmetric problems always have perfectly conditioned eigenvalues, and that \eqref{eq:NEPv_scale_inv} warrants additional care in this respect.
	
	Finally, we remark that while the specific values of the non-zero elements in $A_0, A_k$ and $B_k$ are of course chosen in order to illustrate possible complications that can arise with respect to conditioning, we do not believe the structure of these matrices to be unrealistic.
	For instance, bordered structures such as the ones found in $A(v)$ for this example could be achieved by considering standard finite difference discretizations of a PDE over a square domain, with a mixture of homogeneous and inhomogeneous Dirichlet boundary conditions.
	Hence, while we do not expect this exact example to be encountered in application, we do believe it to be a representative structure, and that it illustrates that even seemingly simple problems have to be approached with caution when incorporating eigenvector nonlinearities.
	
	\subsection{Eigenvalue bifurcation}\label{sim:eigval_illcond}
	
	\begin{figure}
		\centering
		\tikzsetnextfilename{bifurcationplot} % name for the temporary generated file by tikz externalization in gfx_tmp (see preamble)
\begin{tikzpicture} % vipl: changing this slightly because im too pedantic about plots :)
	\begin{axis}[
		grid=major,
		%title = Bifurcation diagram, % vipl was here, no title for plots, just caption
		xlabel={$\delta$},
		ylabel={$\lambda$},
		%width=0.95\textwidth,
		width=\textwidth,
		height=0.5\textwidth,
		xmin=-5,
		xmax=0,
		ymin=-6,
		ymax=6,
		]
		\addplot [
		black,
		thick,
		mark=square,
		mark repeat=50,
		mark options={color=mydarkred,scale=1.2,solid},
		] table [x=a, y=b, col sep=comma]  {data/test_condition_bifurcation4.csv};
		
		\addplot [
		black,
		thick,
		mark=o,
		mark repeat=50,
		mark options={color=myred,scale=1.2,solid},
		] table [x=a, y=b, col sep=comma]  {data/test_condition_bifurcation1.csv};
		
		\addplot [
		black,
		thick,
		mark=x,
		mark repeat=50,
		mark phase=35,
		mark options={color=mygreen,scale=1.5,solid},
		] table [x=a, y=b, col sep=comma]  {data/test_condition_bifurcation2.csv};
		
		\addplot [
		black,
		thick,
		mark=diamond,
		mark repeat=50,
		mark phase=40,
		mark options={color=myblue,scale=1.5,solid},
		] table [x=a, y=b, col sep=comma]  {data/test_condition_bifurcation3.csv};
		
		\addplot [
		black,
		thick,
		mark=triangle,
		mark repeat=50,
		mark options={color=mydarkblue,scale=1.5,solid},
		] table [x=a, y=b, col sep=comma]  {data/test_condition_bifurcation5.csv};
	\end{axis}
\end{tikzpicture}
		\caption{Eigenvalues of \eqref{eq:bifurcation_problem} as functions of a perturbation parameter $\delta$. The different marks correspond to different branches of the eigenvalues.}
		\label{fig:bifurcation}
	\end{figure}
	
	\begin{figure}
		\centering
		\tikzsetnextfilename{bifurcationplot} % name for the temporary generated file by tikz externalization in gfx_tmp (see preamble)
\begin{tikzpicture}
	\begin{semilogyaxis}[
		grid=major,
		xlabel={$\delta$},
		ylabel={$\kappa(\lambda)$},
		width=\textwidth,
		height=0.5\textwidth,
		xmin=-5,
		xmax=0,
		ymin=1,
		ymax=1e3,
		]
		\addplot [
		black,
		thick,
		mark=square,
		mark repeat=50,
		mark options={color=mydarkred,scale=1.2,solid},
		] table [x=a, y=c, col sep=comma]  {data/test_condition_bifurcation4.csv};
		
		\addplot [
		black,
		thick,
		mark=o,
		mark repeat=50,
		mark options={color=myred,scale=1.2,solid},
		] table [x=a, y=c, col sep=comma]  {data/test_condition_bifurcation1.csv};
		
		\addplot [
		black,
		thick,
		mark=x,
		mark repeat=40,
		mark phase=0,
		mark options={color=mygreen,scale=1.5,solid},
		] table [x=a, y=c, col sep=comma]  {data/test_condition_bifurcation2.csv};
		
		\addplot [
		black,
		thick,
		mark=diamond,
		mark repeat=50,
		mark phase=40,
		mark options={color=myblue,scale=1.5,solid},
		] table [x=a, y=c, col sep=comma]  {data/test_condition_bifurcation3.csv};
		
		\addplot [
		black,
		thick,
		mark=triangle,
		mark repeat=50,
		mark options={color=mydarkblue,scale=1.5,solid},
		] table [x=a, y=c, col sep=comma]  {data/test_condition_bifurcation5.csv};
	\end{semilogyaxis}
\end{tikzpicture}
		\caption{Condition number of each eigenvalue of \eqref{eq:bifurcation_problem} as functions of a perturbation parameter $\delta$. The different marks correspond to the different branches in \Cref{fig:bifurcation}.}
		\label{fig:bifurcation_condition}
	\end{figure}
	
	The example in \Cref{sim:wilkinson} demonstrated that even though \eqref{eq:NEPv_scale_inv} is symmetric, one can encounter eigenvalues that are arbitrarily ill-conditioned by increasing the size of the problem.
	However, for any given $n$ in \eqref{eq:wilkinson_nepv}, the eigenvalue $\lambda=n$ has a finite condition number.
	To further reiterate that one can encounter ill-conditioning even for seemingly simple instances of \eqref{eq:NEPv_scale_inv}, we now illustrate (for a fixed problem size) that even though we have a symmetric problem, a saddle-node bifurcation may occur in its eigenvalues, when they are viewed as functions of a perturbation parameter.
	In this sense, this implies that the eigenvalue condition number can become infinitely large.
	%Consider the NEPv \eqref{eq:NEPv_scale_inv} given by
	
	Consider another instance of \eqref{eq:NEPv_scale_inv}, that is of the same class as the problem in \Cref{sim:wilkinson}, but this time given by
	\begin{equation}\label{eq:bifurcation_problem}
		A(v) = A_0 + \frac{\trans{v} B v}{\trans{v} v} A_1.
	\end{equation}
	We choose the matrices $A_0, A_1$ and $B$ as
	\begin{align*}
		A_0 = \begin{bmatrix}
			1 + \delta & \phantom{-}1 & \phantom{-}1 \\ 
			1 & -2 & -2 \\
			1 & -2 & \phantom{-}0
		\end{bmatrix}, \quad A_1 = \begin{bmatrix}
			0 & \phantom{-}1 & \phantom{-}0 \\ 
			1 & \phantom{-}2 & -1 \\
			0 & -1 & \phantom{-}5
		\end{bmatrix}, \quad B = \begin{bmatrix}
			\phantom{-}0 & -1 & \phantom{-}2 \\ 
			-1 & \phantom{-}2 & \phantom{-}1 \\
			\phantom{-}2 & \phantom{-}1 & \phantom{-}1
		\end{bmatrix},
	\end{align*}
	with $\delta\in\mathbb{R}$ a perturbation parameter.
	With these choices of matrices, \eqref{eq:bifurcation_problem} is solved for various values of $\delta$ ranging between $-5$ and $0$, and for each $\delta$ we record the real-valued eigenvalues of \eqref{eq:bifurcation_problem}.
	%, and for each value we find 9 eigenvalues in $\mathbb{C}$.
	%However, only a subset of these are real.
	In \Cref{fig:bifurcation}, these
	%the real-valued eigenvalues of this problem
	are displayed as functions of the parameter $\delta$.
	For $\delta = -3.9$ and $\delta = -1.35$, two of the eigenvalues collide
	%and form a complex conjugate pair 
	around $\lambda = -2.94$ and $\lambda = 0.785$, respectively, indicating a saddle-node bifurcation.
	In these two cases, the left eigenvector $u$ of the Jacobian is orthogonal to the eigenvector $v$.
	In other words, the Jacobian \eqref{eq:eigvec_cond_mat_prelims} becomes singular, and the implicit function theorem no longer applies.
	This means that the eigenpairs are no longer simple according to \Cref{def:simple_eig}, or equivalently, that the condition number is infinite.
	Note that the slope of the curves in \Cref{fig:bifurcation} does not give the condition number of the corresponding eigenvalue.
	However, it does provide the eigenvalue sensitivity \eqref{eq:lambda_deriv_expression} for a specific choice of perturbations $E_i$.
	Hence, the absolute value of the slope of the curves does provide a lower bound on the (absolute) condition number. 
	Since this slope grows unbounded near the bifurcations, so does the condition number.
	Therefore, by approaching such a bifurcation along any of the branches seen in \Cref{fig:bifurcation}, we can make the condition number arbitrarily large.
	Close to these turning points, the eigenvalues exhibit a square-root behavior when viewed as functions of $\delta$. 
	That is, if the bifurcation occurs at $\delta_0$, the eigenvalues essentially behave as
	\begin{equation}
		|\lambda(\delta) - \lambda(\delta_0)| = c \sqrt{|\delta-\delta_0|} + o(\sqrt{|\delta-\delta_0|}),
	\end{equation}
	around this point, with $c$ a nonzero constant.
	This is a behavior that never occurs for linear symmetric problems, further reiterating that \eqref{eq:NEPv_scale_inv} exhibits characteristics that are qualitatively different to a linear symmetric formulation, and that many things that one would take for granted in the linear setting certainly do not apply for \eqref{eq:NEPv_scale_inv}.
	The ill-conditioning is also visualized in \Cref{fig:bifurcation_condition} where the condition number \eqref{eq:condition_number} for each branch is plotted in function of $\delta$. Indeed, around the turning points $\delta = -3.9$ and $\delta = -1.35$ where two eigenvalues collide, their corresponding condition number approaches infinity. There is an additional singularity at $\delta = -1.73$ because one of the eigenvalues equals zero in this region and the relative condition number is computed.
	
	\section{Conclusions and outlook}\label{sec:conc_and_out}
	
	In this paper we considered condition numbers and backward errors for nonlinear eigenvalue problems featuring eigenvector nonlinearities.
	We derived closed-form expressions for both of these quantities, both when considering arbitrary perturbations, as well as symmetric ones, applied to the original problem. 
	The formulas for the backward errors are of a similar character to analogous formulas for, e.g., the polynomial eigenvalue problem \cite{tisseur2000backward}. 
	% while the conditioning formulas differs in important ways from the linear and eigenvalue-nonlinear eigenvalue problems.
	For the eigenvalue and eigenvector conditioning, the Jacobian $J(v)$ and its left eigenvector play an important role.
	As this Jacobian is usually not symmetric, eigenvalues of \eqref{eq:NEPv_scale_inv} may become severely ill-conditioned even though the original problem is symmetric, which contrasts with linear eigenvalue problems, where symmetric problems have perfectly conditioned eigenvalues.
	In \Cref{sec:sims}, we demonstrated that such effects can be encountered even for seemingly innocuous problems by means of two numerical illustrations.
	Additionally, our second example, in \Cref{sim:eigval_illcond}, highlights that \eqref{eq:NEPv_scale_inv} can exhibit eigenvalue splitting that can appear highly unintuitive at first glance.
	Specifically, we demonstrated that even though we are considering real symmetric problems, eigenvalues can have branch point singularities when considered as functions of a perturbation.
	This implies that an eigenvalue can become arbitrarily ill-conditioned by changing a perturbation parameter in the problem.
	
	While the backward error formulas we derived are similar in character to those of other problem classes, they none the less represent an important step towards the implementation of robust numerical solvers for \eqref{eq:NEPv_scale_inv}.
	For other types of eigenvalue problems (and many other classes of problems, see references in \Cref{sec:intro}), the backward error provides a natural stopping condition in, e.g., iterative solvers.
	Typically, stopping conditions are based on measuring the (relative) residual norm.
	Our results essentially show that if the goal is to terminate an iterative procedure when a small backward error has been achieved, this is a valid strategy, so long as the nonlinearities in \eqref{eq:NEPv_scale_inv} are well behaved.
	We believe that backward error computations, and by extension the formulas we have derived in this work, will play an important part in the implementation of high-quality software for \eqref{eq:NEPv_scale_inv}, as has been the case historically for many other problems in numerical linear algebra.
	
	The most natural generalization of the results we have presented in this article is perhaps to consider general complex formulations of the NEPv.
	We expect that results similar to those we have derived here might also hold for this case. 
	However, it is not clear how one would integrate this setting with, e.g., the scaling invariance property \eqref{eq:scale_inv_prop}, which is crucial for our results.
	Hence, it is likely that different techniques would be necessary for such a general formulation.
	%We expect that most of, if not all, of our results will transfer to this setting, however it is possible that additional complications would arise.
	We have also restricted this work to the real symmetric setting 
	%for clarity of presentation, and additionally
	for the reason that this formulation emphasizes the differences in relation to the linear symmetric eigenvalue problem in a particularly explicit fashion.
	Furthermore, many of the important applications of eigenvector nonlinear problems appear in the form \eqref{eq:NEPv_scale_inv}.
	A different but equally natural generalization of this work would be to consider the case where we also incorporate perturbations into the nonlinear functions $f_i(v)$.
	This would yield a broader analysis of the conditioning, and could be incorporated naturally into different types of nonlinearities, for instance those studied in \cite{janssens2025linearizing}.
	
	Furthermore, we envision that the current work will be viewed as a stepping stone towards a broader perturbation analysis of eigenvector-nonlinear problems.
	For instance, the important concept of pseudospectra \cite{trefethen2005pseudospectra} is something that has been very effective for gaining insight into stability aspects associated with eigenvalue-nonlinear problems \cite{michiels2006pseudo}, among other applications.
	To the best of the knowledge of the authors, this is something that is yet to be investigated for eigenvector nonlinearities.
	A different point of interest is the computational aspects of evaluating, e.g., the condition number.
	Condition number estimation has been extensively studied for, among many other problems, linear systems. 
	See for instance \cite{higham20001normest}, and the references therein, which also provides an application to the computation of $1$-norm pseudospectra.
	The cheap estimation of conditioning (and potentially pseudospectra) could be an important aspect of monitoring the convergence of iterative methods, and would hence be an interesting line of research to pursue in future work.
	~\\
	
	\textbf{Acknowledgements} 
	%Part of this work was conducted during a visit of the first author to KU Leuven. The warm welcome and hospitality they received from Victor Janssens, Prof. Wim Michiels, and Prof. Karl Meerbergen, is greatly appreciated. 
	Part of this work was conducted during mutual visit of the first author to KU Leuven and the second author to KTH. 
	%The warm welcome and hospitality they received from Victor Janssens, Prof. Wim Michiels, and Prof. Karl Meerbergen, is greatly appreciated. 
	The first author acknowledges the financial support provided by the foundations managed by the Royal Swedish Academy of Sciences, under grant no. MA2025-0049.  The other authors were
	supported by the project G027624N of the Research Foundation-Flanders (FWO
	- Vlaanderen).

	\section*{Declarations}
	\textbf{Conflicts of interest} The authors declare no conflict of interest.
	
	\bibliographystyle{abbrv}
	\bibliography{erranalysis} % vipl: use this bib file

@article{jarlebring2022implicit,
author = {Jarlebring, Elias and Upadhyaya, Parikshit},
title = {Implicit algorithms for eigenvector nonlinearities},
year = {2022},
publisher = {Springer},
volume = {90},
number = {1},
journal = {Numerical Algorithms},
pages = {301–321},
numpages = {21},
}

@article{tisseur2000backward,
  title={Backward error and condition of polynomial eigenvalue problems},
  author={Tisseur, Françoise},
  journal={Linear Algebra and its Applications},
  volume={309},
  number={1--3},
  pages={339--361},
  year={2000},
  publisher={Elsevier}
}

@article{higham1998structured,
  title={Structured backward error and condition of generalized eigenvalue problems},
  author={Higham, Desmond J. and Higham, Nicholas J.},
  journal={SIAM Journal on Matrix Analysis and Applications},
  volume={20},
  number={2},
  pages={493--512},
  year={1998},
  publisher={SIAM}
}

@article{higham92linsys,
    author = {Higham, Desmond J. and Higham, Nicholas J.},
    title = {Backward error and condition of structured linear systems},
    year = {1992},
    issue_date = {Jan. 1992},
    publisher = {Society for Industrial and Applied Mathematics},
    address = {USA},
    volume = {13},
    number = {1},
    journal = {SIAM Journal on Matrix Analysis and Applications},
    pages = {162–175},
    numpages = {14},
}

@article{ming1998backwarderrbounds,
    author = {Gu, Ming},
    title = {{Backward Perturbation Bounds for Linear Least Squares Problems}},
    journal = {SIAM Journal on Matrix Analysis and Applications},
    volume = {20},
    number = {2},
    pages = {363-372},
    year = {1998},
}

@article{sun1997leastsqbackerr,
    year = {1997},
    author = {Sun, Ji-Guang},
    issn = {0006-3835},
    journal = {BIT Numerical Mathematics},
    language = {eng},
    number = {1},
    pages = {179-188},
    title = {On optimal backward perturbation bounds for the linear least squares problem},
    volume = {37},
}

@Book{Saad:2011:EIGBOOK,
  author	= {Y. Saad},
  title		= {Numerical methods for large eigenvalue problems},
  id		= {1990},
  year		= {2011},
  publisher	= {SIAM},
  zbl_link	= {http://www.zentralblatt-math.org/zmath/en/advanced/?q=an:"1242.65068"}
}

@article{jarlebring2014inverseiter,
    year = {2014},
    author = {Jarlebring, Elias and Kvaal, Simen and Michiels, Wim},
    journal = {SIAM Journal on Scientific Computing},
    number = {4},
    pages = {A1978-A2001},
    title = {{An Inverse Iteration Method for Eigenvalue Problems with Eigenvector Nonlinearities}},
    volume = {36},
}

@article{cai2020perturbation,
  title={Perturbation analysis of an eigenvector-dependent nonlinear eigenvalue problem with applications},
  author={Cai, Yunfeng and Jia, Zhigang and Bai, Zheng-Jian},
  journal={BIT Numerical Mathematics},
  volume={60},
  number={1},
  pages={1--29},
  year={2020},
  publisher={Springer}
}

@book{anderson1999lapackusersguide,
    author = {Anderson, E. and Bai, Z. and Bischof, C. and Blackford, L. S.  and Demmel, J. and Dongarra, J. and Du Croz, J. and Greenbaum, A. and Hammarling, S. and McKenney, A. and Sorensen, D.},
    title = {LAPACK Users' Guide},
    publisher = {Society for Industrial and Applied Mathematics},
    year = {1999},
    doi = {10.1137/1.9780898719604},
    edition   = {Third},
}

@TechReport{slepcusersmanual2025,
   author      = "J. E. Roman and C. Campos and L. Dalcin and E. Romero and A. Tomas",
   title       = "{SLEPc} Users Manual",
   number      = "DSIC-II/24/02 - Revision 3.24",
   institution = "D. Sistemes Inform\`atics i Computaci\'o, Universitat Polit\`ecnica de Val\`encia",
   year        = "2025"
}

@Book{Higham:2002:ACCURACY,
  author	= {N. J. Higham},
  title		= {Accuracy and stability of numerical algorithms. 2nd ed},
  id		= {824},
  year		= {2002},
  publisher	= "{SIAM Philadelphia}",
  zbl_link	= {http://www.zentralblatt-math.org/zmath/en/advanced/?q=an:"1011.65010"}
}

@article{janssens2025linearizing,
    title={Linearizing a nonlinear eigenvalue problem with
    quadratic rational eigenvector nonlinearities},
    author={Janssens, Victor and Meerbergen, Karl and Michiels, Wim},
    year={2025},
    note={arXiv preprint},
    institution={arXiv},
    number={arXiv:2510.02900},
    url={https://doi.org/10.48550/arXiv.2510.02900}
}

@Book{Wilkinson:1988:ALGEBRAIC,
  author	= {J. H. Wilkinson},
  title = {The Algebraic Eigenvalue Problem},
  year		= {1988},
  publisher	= {Oxford University Press},
}

@Book{Golub:2013:MATRIX,
  author	= {G. Golub and C. Van Loan},
  title		= {Matrix Computations},
  id		= {2005},
  journal	= {Johns Hopkins Studies in the Mathematical Sciences.
		  Baltimore, MD: The Johns Hopkins University Press. xxi, },
  year		= {2013},
  note		= {4th edition},
  publisher	= {The Johns Hopkins University Press},
  zbl_link	= {http://www.zentralblatt-math.org/zmath/en/advanced/?q=an:"1268.65037"}
}

@article{fraysse1998gepbackerr,
  author       = {Val{\'{e}}rie Frayss{\'{e}} and
                  Vincent Toumazou},
  title        = {A note on the normwise perturbation theory for the regular generalized eigenproblem},
  journal      = {Numerical Linear Algebra with Applications},
  volume       = {5},
  number       = {1},
  pages        = {1--10},
  year         = {1998},
}

@Article{	  Guttel2017,
  author	= {Güttel, S. and Tisseur, F.},
  title		= {The nonlinear eigenvalue problem},
  journal	= {Acta Numerica},
  year		= {2017},
  volume	= {26},
  pages		= {1-94},
  doi		= {10.1017/S0962492917000034},
  publisher	= {Cambridge University Press}
}

@book{rudin1986mathematical_analysis,
    author = {Rudin, Walter},
    address = {Auckland},
    edition = {3rd},
    publisher = {McGraw-Hill},
    series = {International series in pure and applied mathematics},
    title = {Principles of mathematical analysis},
    year = {1986},
}

@Article{Henning:2025:REVIEW,
  author	= {Patrick Henning and Elias Jarlebring},
  title = {The {Gross-Pitaevskii} equation and eigenvector nonlinearities: Numerical methods and algorithms},
  year = 2025,
 journal = {SIAM Review}
}

@article{bao2003numericalgpe,
    year = {2003},
    author = {Bao, Weizhu and Jaksch, Dieter and Markowich, Peter A.},
    journal = {Journal of Computational Physics},
    number = {1},
    pages = {318-342},
    publisher = {Elsevier Inc},
    title = {{Numerical solution of the Gross–Pitaevskii equation for Bose–Einstein condensation}},
    volume = {187},
}

@article{bao2004normalizedgradflow,
    year = {2004},
    author = {Bao, Weizhu and Du, Qiang},
    journal = {SIAM Journal on Scientific Computing},
    number = {5},
    pages = {1674-1697},
    publisher = {Society for Industrial and Applied Mathematics},
    title = {{Computing the Ground State Solution of Bose--Einstein Condensates by a Normalized Gradient Flow}},
    volume = {25},
}

@article{henning2020sobolevgradflow,
    author = {Henning, Patrick and Peterseim, Daniel},
    title = {{Sobolev Gradient Flow for the Gross--Pitaevskii Eigenvalue Problem: Global Convergence and Computational Efficiency}},
    journal = {SIAM Journal on Numerical Analysis},
    volume = {58},
    number = {3},
    pages = {1744-1772},
    year = {2020},
}

@article {henning2022invitconv,
    AUTHOR = {Henning, Patrick},
     TITLE = {The dependency of spectral gaps on the convergence of the
              inverse iteration for a nonlinear eigenvector problem},
   JOURNAL = {Math. Models Methods Appl. Sci.},
  FJOURNAL = {Mathematical Models and Methods in Applied Sciences},
    VOLUME = {33},
      YEAR = {2023},
    NUMBER = {7},
     PAGES = {1517--1544},
      ISSN = {0218-2025,1793-6314},
   MRCLASS = {35Q55 (65N12 65N25 65N30 81Q80)},
  MRNUMBER = {4595654},
       DOI = {10.1142/S0218202523500343},
       URL = {https://doi.org/10.1142/S0218202523500343},
}

@article{yang2009scfanalysis,
    year = {2009},
    author = {Yang, Chao and Gao, Weiguo and Meza, Juan C.},
    journal = {SIAM Journal on Matrix Analysis and Applications},
    number = {4},
    pages = {1773-1788},
    publisher = {Society for Industrial and Applied Mathematics},
    title = {{On the Convergence of the Self-Consistent Field Iteration for a Class of Nonlinear Eigenvalue Problems}},
    volume = {30},
}

@article{upadhyaya2021scfconv,
    title = {A density matrix approach to the convergence of the self-consistent field iteration},
    journal = {Numerical Algebra, Control and Optimization},
    volume = {11},
    number = {1},
    pages = {99-115},
    year = {2021},
    author = {Parikshit Upadhyaya and Elias Jarlebring and Emanuel H. Rubensson},
}

@article{bai2022sharpscfestimate,
    author = {Bai, Zhaojun and Li, Ren-Cang and Lu, Ding},
    title = {{Sharp Estimation of Convergence Rate for Self-Consistent Field Iteration to Solve Eigenvector-Dependent Nonlinear Eigenvalue Problems}},
    year = {2022},
    volume = {43},
    number = {1},
    journal = {SIAM Journal on Matrix Analysis and Applications},
    pages = {301–327},
}

@article{lu2024locallyscf,
  title={Locally unitarily invariantizable {NEPv} and convergence analysis of {SCF}},
  author={Lu, Ding and Li, Ren-Cang},
  journal={Mathematics of Computation},
  volume={93},
  number={349},
  pages={2291--2329},
  year={2024},
}

@article{pulay1982diisacceleration,
    author = {Pulay, P.},
    title = {{Improved SCF convergence acceleration}},
    journal = {Journal of Computational Chemistry},
    volume = {3},
    number = {4},
    pages = {556-560},
    doi = {https://doi.org/10.1002/jcc.540030413},
    year = {1982}
}

@article{saad2025accelerationfixedpoint,
    title={Acceleration methods for fixed-point iterations},
    volume={34},
    journal={Acta Numerica},
    author={Saad, Yousef},
    year={2025},
    pages={805–890}
}

@article{jarlebring2026nepvtonep_BIT,
  title={From eigenvector nonlinearities with quadratic structure to eigenvalue nonlinearities with algebraic structure},
  author={Jarlebring, Elias and Lithell, Vilhelm P},
  journal={BIT Numerical Mathematics},
  volume={66},
  number={2},
  pages={30},
  year={2026},
  publisher={Springer}
}

@article{claes2022linearizenepv,
    year = {2022},
    author = {Claes, Rob and Jarlebring, Elias and Meerbergen, Karl and Upadhyaya, Parikshit},
    journal = {SIAM Journal on Matrix Analysis and Applications},
    language = {eng},
    number = {2},
    pages = {764-786},
    title = {{Linearizable Eigenvector Nonlinearities}},
    volume = {43},
}

@article{claes2023contour,
    year = {2023},
    author = {Claes, Rob and Meerbergen, Karl and Telen, Simon},
    journal = {SIAM Journal on Matrix Analysis and Applications},
    number = {4},
    pages = {1619-1644},
    title = {{Contour Integration for Eigenvector Nonlinearities}},
    volume = {44},
}

@book{martin2020electronicstructurecalc,
    place={Cambridge},
    edition={2},
    title={{Electronic Structure: Basic Theory and Practical Methods}},
    publisher={Cambridge University Press},
    author={Martin, Richard M.},
    year={2020}
}

@article{truhar2021relativepert,
    title = {On an eigenvector-dependent nonlinear eigenvalue problem from the perspective of relative perturbation theory},
    journal = {Journal of Computational and Applied Mathematics},
    volume = {395},
    pages = {113596},
    year = {2021},
    author = {Ninoslav Truhar and Ren-Cang Li},
}

@InProceedings{hein2010pspectral,
  author        = {Matthias Hein and Thomas B\"uhler},
  title         = {{An Inverse Power Method for Nonlinear Eigenproblems with
                  Applications in 1-Spectral Clustering and Sparse PCA}},
  booktitle     = {Advances in Neural Information Processing Systems 23 (NIPS
                  2010)},
  year          = {2010},
}

@InProceedings{hein2009pspectral,
  author        = {T. B\"{u}hler and M. Hein},
  title         = {Spectral {C}lustering Based on the {G}raph {p-Laplacian}},
  booktitle     = {Proceedings of the 26th International Conference on
                  Machine Learning},
  pages         = {81-88},
  year          = {2009}
}

@article{bai2018robust,
    author = {Bai, Zhaojun and Lu, Ding and Vandereycken, Bart},
    title = {{Robust Rayleigh Quotient Minimization and Nonlinear Eigenvalue Problems}},
    journal = {SIAM Journal on Scientific Computing},
    volume = {40},
    number = {5},
    pages = {A3495-A3522},
    year = {2018},
}

@article{hochtenbach2008harmritz,
    author = {Hochstenbach, Michiel E. and Sleijpen, Gerard L. G.},
    title = {{Harmonic and refined Rayleigh–Ritz for the polynomial eigenvalue problem}},
    journal = {Numerical Linear Algebra with Applications},
    volume = {15},
    number = {1},
    pages = {35-54},
    year = {2008}
}

@article{paige1995harmritz,
    author = {Paige, Chris C. and Parlett, Beresford N. and van der Vorst, Henk A.},
    title = {Approximate solutions and eigenvalue bounds from {K}rylov subspaces},
    journal = {Numerical Linear Algebra with Applications},
    volume = {2},
    number = {2},
    pages = {115-133},
    year = {1995}
}

@article{michiels2006pseudo,
    title = {Pseudospectra and stability radii for analytic matrix functions with application to time-delay systems},
    journal = {Linear Algebra and its Applications},
    volume = {418},
    number = {1},
    pages = {315-335},
    year = {2006},
    author = {Wim Michiels and Kirk Green and Thomas Wagenknecht and Silviu-Iulian Niculescu},
}

@article{higham20001normest,
    year = {2000},
    author = {Higham, Nicholas J. and Tisseur, Françoise},
    journal = {SIAM Journal on Matrix Analysis and Applications},
    number = {4},
    pages = {1185-1201},
    publisher = {Society for Industrial and Applied Mathematics},
    title = {A Block Algorithm for Matrix 1-Norm Estimation, with an Application to 1-Norm Pseudospectra},
    volume = {21},
}

@book{trefethen2005pseudospectra,
  author= {L. N. Trefethen and M. Embree},
  title = {Spectra and Pseudospectra. The Behavior of Nonnormal
		  Matrices and Operators},
  id	= {753},
  year = {2005},
  publisher = {Princeton University Press},
}
	
\end{document}